\tikzstyle{vertex}=[circle, draw, inner sep=0pt, minimum size=6pt]
\def\imod#1{\allowbreak\mkern10mu({\operator@font mod}\,\,#1)}
\theoremstyle{plain}
\newtheorem{theoremN}{Theorem}[section]
\newtheorem{corollaryN}[theoremN]{Corollary}
\newtheorem{lemmaN}[theoremN]{Lemma}
\theoremstyle{definition}
\theoremstyle{remark}
\newcommand{\tightoverset}[2]{%
  \mathop{#2}\limits^{\vbox to -.5ex{\kern-0.75ex\hbox{$#1$}\vss}}}
\begin{document}
\title[Structural Equivalence in Graphs and Complete Skeletons]{Structural Equivalence in Graphs and Complete Skeletons}

\author[Higgins]{Jonathan Higgins}

\address{Jonathan Higgins, Mathematics and Computer Science Department, Wheaton College, 501 College Ave, Wheaton, IL 60187, USA}
\email{jonathan.higgins@my.wheaton.edu}

\subjclass[2010] {Primary: 05C25, Secondary: 05C50}
\keywords {graphs, automorphisms, eigenvalues}
\maketitle

\begin{abstract}
     Two vertices $u$ and $v$ of a graph $\Gamma$ are strucuturally equivalent if and only if the transposition $(u\,v)$ is in Aut($\Gamma$), the automorphism group of $\Gamma$. Some properties of structural equivalence and the group of vertex permutations generated by the transpositions in Aut($\Gamma$) are discussed, along with the prime graphs of these groups. The notion of structural equivalence is used to develop a way of reconfiguring graphs into what are called their complete skeletons, which is closely related to compression graphs. Finally, the complete skeleton of a graph $\Gamma$, denoted $\Omega(\Gamma)$, is used to find a formula for rank$(I+A(\Gamma))$, which is helpful for determining the multiplicity of the -1 eigenvalue of $\Gamma$.
\end{abstract}

\maketitle

\section{Introduction}

The notion of vertex similarity in graphs is well understood. Two vertices $u$ and $v$ in a graph $\Gamma$ are said to be similar if there is some automorphism $\rho \in \textrm{Aut}(\Gamma)$ such that $\rho(u)=v$. This idea can be found in just about any graph theory textbook (see, for example, \cite{GT_book}). In this paper, we will consider a stronger notion of similarity of graph vertices known as structural equivalence. In particular, we can say that two vertices $u$ and $v$ in $\Gamma$ are structurally equivalent if their transposition is in Aut$(\Gamma)$. In section 2, this idea will be considered in depth and it is used to construct a group for studying the structure of graphs, which we will call the structural equivalence permutation (SEP) group. This information is known in the literature, so its purpose is to familiarize the reader with this topic rather than establishing novel results.

\bigskip
In the third section, we study the prime graphs (or Gruenberg-Kegel graphs) of SEP groups of graphs. The prime graph $\Gamma_G$ of a group $G$ is obtained by taking the set of prime divisors of the group order $|G|$ as its vertex set and, if $p,q \in V(\Gamma_G)$, $pq \in E(\Gamma_G)$ if and only if there is an element in $G$ of order $pq$. Prime graphs of groups have been studied in many other contexts and a number of characterizations have already been proved. For example, it was proved in \cite{2015_REU_Paper} that a graph $\Gamma$ is isomorphic to the prime graph of a solvable group if and only if $\overline{\Gamma}$ is triangle-free and 3-colorable. In this paper, we are not concerned with proving new characterizations, but we are interested in seeing how they can inform us about the structure of SEP groups. Additionally, we will consider chains of prime graphs of SEP groups, proving that they must all terminate.

\bigskip
In the final two sections of the paper, we look at compressions of graphs and develop the idea of using complete skeletons for effectively conveying information about graphs after being compressed. Finally, we show that the problem of determining the multiplicity of the -1 eigenvalue for a graph $\Gamma$ can be reduced to a problem regarding the complete skeleton of $\Gamma$, denoted $\Omega(\Gamma)$.

\section{Structurally Equivalent Vertices and the SEP Group}

Two vertices $u,v$ of a graph are said to be similar if there is an automorphism $\rho$, which maps $u$ to $v$. The notion of vertex-transitive graphs arises when all vertices of a graph are similar. Here, though, we will consider a stronger, but related, condition on vertices in a graph. 

\bigskip
\noindent
\textbf{Definition.} Two vertices $u, v \in V(\Gamma)$ are \textbf{structurally equivalent} if and only if $(u\, v) \in$ Aut$(\Gamma)$. In other words, the permutation switching just the two vertices must be an automorphism.

\bigskip
From this definition, we see that two structurally equivalent vertices are essentially the same in an unlabelled graph, and if we swap two structurally equivalent vertices in a labelled graph, this does not affect the edge set. In the theorem below, we will use the notation $N^i(x)$ to denote the $i$th neighborhood of a vertex $x$ in a graph $\Gamma$, meaning $N^i(x) =\{y \in V(\Gamma) : d(x,y)=i\}$.

\begin{theoremN}
\label{neighborhoods}
In a simple connected graph $\Gamma$ with at least two vertices, two connected vertices $u,v$ are structurally equivalent if and only if $N^i(u) \setminus \{v\} = N^i(v) \setminus \{u\}$ for all $i \leq \textrm{diam}(\Gamma)$.
\end{theoremN}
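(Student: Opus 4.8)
The plan is to establish the biconditional by proving each implication directly, using distance-preservation for the forward direction and a case check on edges for the converse. Throughout I would restrict to $1 \leq i \leq \textrm{diam}(\Gamma)$, since for $i = 0$ the asserted equality degenerates to $\{u\} = \{v\}$ and is plainly not intended.

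For the forward implication, suppose $u$ and $v$ are structurally equivalent, so that $\sigma := (u\,v) \in \textrm{Aut}(\Gamma)$. The observation driving everything is that $\sigma$ fixes every vertex $w \notin \{u,v\}$ while interchanging $u$ and $v$. Since automorphisms preserve graph distance, for any such $w$ I would write
\[
d(u,w) = d\bigl(\sigma(u),\sigma(w)\bigr) = d(v,w),
\]
so that $w \in N^i(u)$ exactly when $w \in N^i(v)$. Because $u \notin N^i(u)$ and $v \notin N^i(v)$ for $i \geq 1$, excising $v$ from $N^i(u)$ and $u$ from $N^i(v)$ leaves in both cases precisely $\{w \notin \{u,v\} : d(u,w) = i\}$, giving $N^i(u)\setminus\{v\} = N^i(v)\setminus\{u\}$ for every such $i$.

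For the converse I would assume the neighborhood equalities hold and deduce that $\sigma = (u\,v)$ is an automorphism; in fact only the case $i = 1$ is consumed. Since $\sigma$ is a bijection equal to its own inverse, it suffices to check that $\sigma$ carries edges to edges, for then every edge is itself a $\sigma$-image of an edge and non-edges are forced to non-edges. I would then case on an edge $\{x,y\}$: if it meets neither $u$ nor $v$ both endpoints are fixed and the edge survives; the edge $\{u,v\}$, present because $u$ and $v$ are connected, maps to itself; and an edge $\{u,w\}$ with $w \notin \{u,v\}$ puts $w \in N^1(u)\setminus\{v\}$, hence $w \in N^1(v)\setminus\{u\}$ by hypothesis, so $\{\sigma(u),\sigma(w)\} = \{v,w\}$ is again an edge, with the edges at $v$ handled symmetrically.

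I do not expect a genuinely hard step here, as the argument is short; the points demanding care are the set bookkeeping in the forward direction — one must use that $u$ and $v$ are adjacent so that each lies in the other's first neighborhood and is correctly removed by the $\setminus\{u\}$ and $\setminus\{v\}$ — and the reduction, valid because $\sigma$ is an involution, that verifying edge-preservation in a single direction already yields a full automorphism. It is also worth flagging that the converse needs only the $i=1$ hypothesis, the higher neighborhood equalities being recovered for free in the forward direction, so that structural equivalence, the $i=1$ equality, and the all-$i$ equality are in fact mutually equivalent.
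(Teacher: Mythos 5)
Your proof is correct, and while it follows the same overall shape as the paper's (a direct argument for each implication), the mechanisms differ in both directions. For the forward implication the paper first extracts $N^1(u)\setminus\{v\}=N^1(v)\setminus\{u\}$ from adjacency preservation and then runs a contradiction argument, replacing $v$ by $u$ in a shortest $v$--$x$ path to force $d(u,x)=d(v,x)$; this obliges it to check that $N^1(u)\setminus\{v\}$ is nonempty and to treat $|V(\Gamma)|=2$ separately. You instead invoke the single fact that automorphisms preserve distance and that $(u\,v)$ fixes every $w\notin\{u,v\}$, so $d(u,w)=d(\sigma(u),\sigma(w))=d(v,w)$ at once; this is cleaner, avoids the case split, and your bookkeeping with $u\notin N^i(u)$ for $i\geq 1$ is exactly the care the deletion of $\{u\}$ and $\{v\}$ demands. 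For the converse the paper essentially asserts that the hypothesis lets $u$ and $v$ be swapped without disturbing adjacency; you supply the actual verification, casing on the three types of edges and using that an involution mapping edges to edges is automatically an automorphism. Your added observation that the converse consumes only the $i=1$ equality, so that the $i=1$ condition and the all-$i$ condition are equivalent for adjacent vertices, is a genuine sharpening that the paper only gestures at in the remark following the theorem.
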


\begin{proof}
$(\Longrightarrow)$ Suppose $(u\, v) \in \textrm{Aut}(\Gamma)$. Clearly, $N^1(u) \setminus \{v\}=N^1(v) \setminus \{u\}$ because the transposition $(u \, v)$ must preserve adjacency relations. Next, assume there is some $i \leq \textrm{diam}(\Gamma)$ such that $N^i(u) \setminus \{v\} \neq N^i(v) \setminus \{u\}$. Thus, we can say without loss of generality that there is some $x \in V(\Gamma)$ such that $d(u,x) > d(v,x)$. This cannot be, though, because $N^1(u)\setminus \{v\}=N^1(v)\setminus\{u\}$, which is non-empty if $|V(\Gamma)|>2$. This means that a shortest path between $u$ and $x$ can also be a shortest path between $v$ and $x$, simply by replacing the $u$ with the $v$. Hence, $d(u,x)=d(v,x)$ for all $x \in V(\Gamma) \setminus \{u,v\}$, so $N^i(u)\setminus \{v\}=N^i(v)\setminus\{u\}$. If $|V(\Gamma)|=2$, then the statement holds trivially.
\\
\\
$(\Longleftarrow)$ Suppose $N^i(u) \setminus \{v\}=N^i(v) \setminus \{u\}$ for all $i \leq \textrm{diam}(\Gamma)$. This means $u$ and $v$ can be swapped without affecting the adjacency relations of $\Gamma$. Therefore, $(u \, v) \in \textrm{Aut}(\Gamma)$, which tells us that $u$ and $v$ are structurally equivalent. 
\end{proof}

\bigskip
Note that it is important to subtract $v$ from $N^i(u)$ and $u$ from $N^i(v)$ because our graph is simple. However, this is only relevant when $i=1$ or 2 because, for any two structurally equivalent vertices $u$ and $v$, $d(u,v) \leq 2$. Hence, we need $N^1(u) \setminus \{v\}=N^1(v) \setminus \{u\}$ when $u$ and $v$ are adjacent and $N^2(u) \setminus \{v\}=N^2(v) \setminus \{u\}$ when $u$ and $v$ are not adjacent. We will keep the notation $N^i(u) \setminus \{v\}=N^i(v) \setminus \{u\}$, though, so that we can cover both cases at once.

\bigskip
It is not difficult to see that structural equivalence forms an equivalence relation on the vertex set of a graph, which we see in the following theorem. In other words, the vertices of a graph can be partitioned into sets of structurally equivalent vertices. The proof is omitted because this is a well-known result in graph and network theory. Algorithms have been developed to determine these equivalence classes, which can be seen in \cite{algorithm}

\begin{theoremN}
Structural equivalence forms an equivalance relation on the vertex set of a graph. 
\end{theoremN}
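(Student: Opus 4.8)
The plan is to verify the three defining properties of an equivalence relation—reflexivity, symmetry, and transitivity—directly from the definition, leaning on the single fact that $\textrm{Aut}(\Gamma)$ is a group under composition. For reflexivity, I would observe that the degenerate transposition $(u\,u)$ is simply the identity permutation, which lies in $\textrm{Aut}(\Gamma)$ for every graph, so every vertex is structurally equivalent to itself. Symmetry is equally immediate: since $(u\,v)$ and $(v\,u)$ denote one and the same permutation, $u \sim v$ forces $v \sim u$.

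The transitivity step carries the real content, and it is the step I expect to be the main obstacle, because the naive product of two transpositions sharing a point is a $3$-cycle rather than a transposition, so closure under composition alone does not finish the job. Suppose $u \sim v$ and $v \sim w$, so that both $(u\,v)$ and $(v\,w)$ belong to $\textrm{Aut}(\Gamma)$. The key device is the standard conjugation identity for transpositions: for any permutation $\sigma$ and distinct points $a,b$, one has $\sigma(a\,b)\sigma^{-1} = (\sigma(a)\,\sigma(b))$. Applying this with $\sigma = (u\,v)$ and $(a\,b) = (v\,w)$, and using $(u\,v)^{-1} = (u\,v)$, gives
\[
(u\,v)(v\,w)(u\,v)^{-1} = (u\,w).
\]
Because $\textrm{Aut}(\Gamma)$ is closed under composition and inverses, the left-hand side is an automorphism, hence so is $(u\,w)$. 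This yields $u \sim w$ and completes transitivity.

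Alternatively, one could route the argument through Theorem \ref{neighborhoods} and verify that the neighborhood condition $N^i(u)\setminus\{v\} = N^i(v)\setminus\{u\}$ is itself transitive. That approach is viable but noticeably more delicate, since the set-subtraction bookkeeping must be handled separately for adjacent versus non-adjacent pairs; for this reason I would favor the one-line conjugation argument above, which sidesteps those case distinctions entirely.
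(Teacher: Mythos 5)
Your proof is correct. Note, however, that the paper itself offers no proof to compare against: it states the theorem and explicitly omits the argument as ``a well-known result in graph and network theory,'' citing the algorithmic literature instead. Your write-up therefore supplies a complete justification where the paper supplies none. The substance of your argument is sound: reflexivity and symmetry are immediate from the group structure of $\mathrm{Aut}(\Gamma)$ and the symmetry of the notation $(u\,v)$, and you correctly identify the one nontrivial point --- that the product $(u\,v)(v\,w)$ is a $3$-cycle, not a transposition, so closure alone does not give transitivity --- and resolve it with the conjugation identity $(u\,v)(v\,w)(u\,v)^{-1}=(u\,w)$, which lies in $\mathrm{Aut}(\Gamma)$ since automorphism groups are closed under products and inverses. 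The only cosmetic remarks: reflexivity is really a matter of adopting the convention that the ``transposition'' $(u\,u)$ means the identity (or simply declaring $u\sim u$ by fiat), and in the transitivity step one should dispose of the degenerate case $u=w$ by reflexivity before invoking the conjugation identity, since $(u\,w)$ is only a transposition when $u\neq w$. Neither affects the validity of the argument.
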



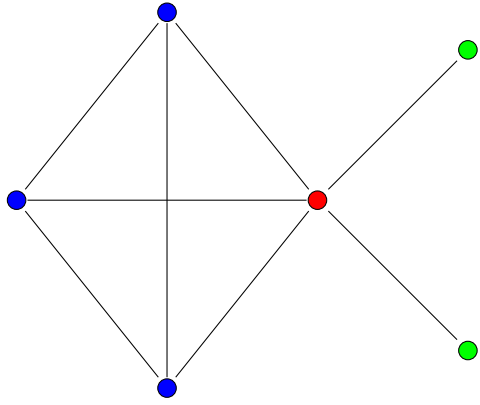
\begin{figure}
    \centering
    \begin{tikzpicture}
     \node (n1) at (1,5) {};
  \node (n2) at (3,7.5)  {};
  \node (n3) at (3,2.5)  {};
  \node (n4) at (5,5) {};
  \node (n5) at (7,7)  {};
  \node (n6) at (7,3)  {};
  
  \draw [fill=blue] (1,5) circle (3.5pt);
\draw [fill=blue] (3,7.5) circle (3.5pt);
\draw [fill=blue] (3,2.5) circle (3.5pt);
\draw [fill=red] (5,5) circle (3.5pt);
\draw [fill=green] (7,7) circle (3.5pt);
\draw [fill=green] (7,3) circle (3.5pt);

 \foreach \from/\to in {n1/n2,n1/n3,n1/n4,n2/n3,n2/n4,n3/n4,n4/n5,n4/n6}
    \draw (\from) -- (\to);
    \end{tikzpicture}
    \caption{The blue, red, and green vertices represent the the three sets of structurally equivalent vertices that partition the vertex set of the pineapple graph $K_4^2$.}
    \label{pineapple graph}
\end{figure}

\bigskip
\noindent
\textbf{Definition} The \textbf{structural equivalence classes} of a graph $\Gamma$ are the sets of structurally equivalent vertices that partition $V(\Gamma)$.

\bigskip
For the remainder of the paper, we will simply use "equivalence classes" to refer to the structural equivalence classes in a graph.

\bigskip
Just as we can discuss the automorphism group of a graph, so can we formulate a group of permutations between structurally equivalent vertices in a graph, which we define below.

\bigskip
\noindent
\textbf{Definition.} For a graph $\Gamma$, we call the group of permutations between structurally equivalent vertices in $\Gamma$ the \textbf{structural equivalence permutation group} of $\Gamma$, which we write as $SEP(\Gamma)$. 

\bigskip
One way that we can express this group is 
\[
SEP(\Gamma) = \langle \{(u\,v): (u\,v) \in \textrm{Aut}(\Gamma)\} \rangle.
\] 

Because structural equivalence is an equivalence relation, it is easy to see that the SEP group of an equivalence class is just the symmetric group $S_n$ when there are $n$ vertices in the equivalence class. Hence, because $K_n$ has just one equivalence class, we see $SEP(K_n) = \textrm{Aut}(K_n) = S_n$. However, for any graph that is not complete, there are at least two equivalence classes, so $SEP(\Gamma)$ has a minimum of two symmetric permutation subgroups on disjoint sets of vertices. We will let $s(\Gamma)$ denote the number of equivalence classes for $\Gamma$. Next, we let the permutation group on the $i$th equivalence class $(\textrm{where}\, 1 \leq i \leq s(\Gamma))$ be written as $S_{\gamma_i,i}$, where $\gamma_i$ is the number of vertices in this $i$th equivalence class. Also, note that $S_{\gamma_i,i} \cap S_{\gamma_j,j} = 1$ for $i \neq j$ because the symmetric groups are on disjoint sets of vertices. This group is valuable for the studying of chemical structures, as seen in \cite{chemistry}. Also, algorithms have been designed to determine these groups for particular graphs (see \cite{chemistry_algorithm2} and \cite{chemistry_algorithm}). 

\begin{theoremN}
\label{SP theorem}
We can write the SEP group of $\Gamma$ as \begin{equation}
    SEP(\Gamma) = \big{\langle} \bigcup S_{\gamma_i,i} : 1\leq i \leq s(\Gamma) \big{\rangle} \cong \prod_{i=1}^{s(\Gamma)} S_{\gamma_i,i}.
\end{equation}
Also, $|SEP(\Gamma)| = \prod_{i=1}^{s(\Gamma)} \gamma_i !$.
\end{theoremN}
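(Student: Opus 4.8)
The plan is to prove the statement in three stages: first the generating-set identity $SEP(\Gamma) = \langle \bigcup_i S_{\gamma_i,i}\rangle$, then the internal direct product decomposition, and finally the order formula as an immediate corollary.

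First I would establish the generating-set identity by a two-sided inclusion. Recall that $SEP(\Gamma)$ is generated by the transpositions $(u\,v)$ lying in $\mathrm{Aut}(\Gamma)$. Any such transposition has $u$ and $v$ structurally equivalent, so by the fact that structural equivalence is an equivalence relation (the preceding theorem), $u$ and $v$ lie in a common equivalence class, say the $i$th one; hence $(u\,v) \in S_{\gamma_i,i}$, and every generator of $SEP(\Gamma)$ belongs to $\bigcup_i S_{\gamma_i,i}$. For the reverse inclusion, note that each $S_{\gamma_i,i}$ is generated by the transpositions of pairs of vertices within the $i$th class; since any two vertices in the same class are structurally equivalent, each such transposition lies in $\mathrm{Aut}(\Gamma)$ and therefore in $SEP(\Gamma)$, giving $S_{\gamma_i,i} \subseteq SEP(\Gamma)$ for every $i$. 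The two inclusions yield the claimed equality.

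Next I would verify the criteria for an internal direct product. The essential observation is that the groups $S_{\gamma_i,i}$ have pairwise disjoint supports, since the equivalence classes partition $V(\Gamma)$. This immediately gives two facts. (i) For $i \neq j$, any $\sigma \in S_{\gamma_i,i}$ and $\tau \in S_{\gamma_j,j}$ move disjoint sets of vertices and therefore commute, so elements of distinct factors commute. (ii) An element of $S_{\gamma_i,i}$ fixes every vertex outside the $i$th class, while any element of $\langle \bigcup_{j\neq i} S_{\gamma_j,j}\rangle$ fixes every vertex of the $i$th class; an element lying in both must fix all of $V(\Gamma)$, so $S_{\gamma_i,i} \cap \langle \bigcup_{j\neq i} S_{\gamma_j,j}\rangle = 1$. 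Together with the generating-set identity from the first stage, conditions (i) and (ii) are exactly the hypotheses of the internal direct product theorem, whence $SEP(\Gamma) \cong \prod_{i=1}^{s(\Gamma)} S_{\gamma_i,i}$.

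Finally, the order formula follows at once: since the decomposition is a direct product and $|S_{\gamma_i,i}| = \gamma_i!$, we obtain $|SEP(\Gamma)| = \prod_{i=1}^{s(\Gamma)} \gamma_i!$. I expect the only point requiring any care to be the trivial-intersection condition (ii), and even this is essentially forced once the disjoint-support observation is in hand; the remainder is bookkeeping that rides entirely on the partition of $V(\Gamma)$ into equivalence classes.
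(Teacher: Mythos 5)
Your proposal is correct and follows essentially the same route as the paper: both directions of the generating-set inclusion, then the disjoint-support observation to obtain the direct product, then the order formula. If anything, your verification of the internal direct product is slightly more careful than the paper's, since you check $S_{\gamma_i,i} \cap \big\langle \bigcup_{j\neq i} S_{\gamma_j,j}\big\rangle = 1$ rather than only the pairwise intersections, which is the condition actually required.
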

\begin{proof}
Clearly, $SEP(\Gamma) \subseteq \big{\langle} \bigcup S_{\gamma_i,i} : 1\leq i \leq s(\Gamma) \big{\rangle}$ because there is some $1 \leq i \leq s(\Gamma)$ such that $(u\, v) \in S_{\gamma_i,i}$ for all $(u\,v) \in \mathrm{Aut}(\Gamma)$, which tells us that all the generators of $SEP(\Gamma)$ are included in $\big{\langle} \bigcup S_{\gamma_i,i} : 1\leq i \leq s(\Gamma) \big{\rangle}$. Next, consider some $\sigma \in \big{\langle} \bigcup S_{\gamma_i,i} : 1\leq i \leq s(\Gamma) \big{\rangle}$. Because each of the $S_{\gamma_i,i}$ groups consist of the permutations on distinct vertex sets, the smallest subgroup containing $\bigcup S_{\gamma_i,i}$ for all $1 \leq i \leq s(\Gamma)$ is the group consisting of all possible compositions of distinct elements from each of the $S_{\gamma_i,i}$. Because $S_{\gamma_i,i} \cap S_{\gamma_j,j} = 1$ for $i \neq j$, we see that $\big{\langle} \bigcup S_{\gamma_i,i} : 1\leq i \leq s(\Gamma) \big{\rangle}$ can be expressed as the direct product of the $S_{\gamma_i,i}$, i.e. $\big{\langle} \bigcup S_{\gamma_i,i} : 1\leq i \leq s(\Gamma) \big{\rangle} \cong \prod_{i=1}^{s(\Gamma)} S_{\gamma_i,i}$. We find $\big{\langle} \bigcup S_{\gamma_i,i} : 1\leq i \leq s(\Gamma) \big{\rangle} \subseteq SP(\Gamma)$ because each of the $S_{\gamma_i,i}$ can be generated by the transpositions of its vertices, and each element in $\big{\langle} \bigcup S_{\gamma_i,i} : 1\leq i \leq s(\Gamma) \big{\rangle}$, as we saw, is a composition of distinct elements from the $S_{\gamma_i,i}$, so $\sigma \in \big{\langle} \bigcup S_{\gamma_i,i} : 1\leq i \leq s(\Gamma) \big{\rangle}$ can be generated by the transpositions in Aut$(\Gamma)$ via compositions. The formula for the order of $SEP(\Gamma)$ follows immediately from the fact that $SEP(\Gamma) \cong \prod_{i=1}^{s(\Gamma)} S_{\gamma_i,i}$.
\end{proof}

Just as the automorphism group of a graph conveys useful information about the structure of a graph, so does the SEP group. It is also not difficult to see that the SEP group is a subgroup of the automorphism group and it conveys a stronger notion of graph symmetry. Any two elements in a same permutation cycle in $SEP(\Gamma)$ can be transposed without affecting the adjacency relations of the graph. This is not necessarily the case for the automorphism group. This can be seen through the example in Figure \ref{sspg example fig}. If we denote this graph as $\Gamma$, then we find that $SEP(\Gamma)$ only consists of the identity permutation and $(3\,4)$. Consider the permutation $(1\,6)(2\,5)$. This is a valid automorphism, so it is in Aut$(\Gamma)$, but neither $(1\,6)$ nor $(2\,5)$ are in the automorphism group, which would be the case if $(1\,6)(2\,5) \in SEP(\Gamma)$. We see that this generalizes to the notion that $SEP(\Gamma)$ has the following hereditary property:

\begin{theoremN}
\textbf{(Hereditary Property of $SEP(\Gamma)$)} If $\sigma \in SEP(\Gamma)$, then any proper sub-cycle $\tau$ of a cycle in $\sigma$ is in $SEP(\Gamma)$. Also, if $\sigma$ consists of the compositions of disjoint cycles, then each of the disjoint cycles are in $SEP(\Gamma)$.
\end{theoremN}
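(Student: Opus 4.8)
The plan is to reduce everything to the direct-product decomposition established in Theorem~\ref{SP theorem}, namely $SEP(\Gamma) \cong \prod_{i=1}^{s(\Gamma)} S_{\gamma_i,i}$, where each factor $S_{\gamma_i,i}$ is the full symmetric group on the $i$th equivalence class. The crucial preliminary observation is that, because structural equivalence is an equivalence relation, any two vertices lying in a common class are structurally equivalent, so every transposition of two vertices within one class is an automorphism. Consequently every permutation supported on a single equivalence class already lies in $SEP(\Gamma)$; this is exactly the statement that the factor $S_{\gamma_i,i}$ is a full symmetric group of order $\gamma_i!$.

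The key structural step is to show that each cycle in the disjoint-cycle decomposition of $\sigma$ is supported within a single equivalence class. First I would write $\sigma = \sigma_1 \sigma_2 \cdots \sigma_{s(\Gamma)}$ as its image under the isomorphism of Theorem~\ref{SP theorem}, where $\sigma_i \in S_{\gamma_i,i}$ moves only vertices of the $i$th class. Since the supports of the $\sigma_i$ are pairwise disjoint, they commute and the disjoint-cycle decomposition of $\sigma$ is precisely the concatenation of the disjoint-cycle decompositions of the individual $\sigma_i$. Hence no cycle of $\sigma$ can contain vertices from two different classes: every cycle $c$ of $\sigma$ has its support contained in a single class, say the $i$th.

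Given this, the second assertion is immediate: each disjoint cycle $c_j$ of $\sigma$ is supported in one class $i_j$, so $c_j \in S_{\gamma_{i_j},i_j} \subseteq SEP(\Gamma)$. For the first assertion, let $\tau$ be any proper sub-cycle of a cycle $c$ of $\sigma$. Its support is a subset of the support of $c$, which lies in a single class $i$; thus $\tau$ permutes only vertices drawn from the $i$th class, and hence $\tau \in S_{\gamma_i,i} \subseteq SEP(\Gamma)$.

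The main obstacle, though a mild one, is the justification that cycles of $\sigma$ cannot straddle two equivalence classes; everything else follows formally from Theorem~\ref{SP theorem}. I would make this precise using the elementary fact that permutations with pairwise disjoint supports commute and that the cycle decomposition of such a product is the union of the individual decompositions. Once that is in place, membership of both sub-cycles and disjoint cycles in $SEP(\Gamma)$ reduces to the observation that any permutation whose support is confined to one class belongs to the corresponding full symmetric factor.
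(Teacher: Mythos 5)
Your proposal is correct and follows essentially the same route as the paper: both reduce the claim to the decomposition $SEP(\Gamma) \cong \prod_{i=1}^{s(\Gamma)} S_{\gamma_i,i}$ from Theorem~\ref{SP theorem} and observe that every (sub-)cycle of $\sigma$ lies in a single factor $S_{\gamma_i,i}$, hence in $SEP(\Gamma)$. The only difference is that you explicitly justify the step the paper asserts without argument --- that the disjoint-cycle decomposition of $\sigma$ is the concatenation of those of its components $\sigma_i$, so no cycle can straddle two equivalence classes --- which makes your write-up somewhat more complete than the original.
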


In other words, if we have some cycle $(1\,2\,3) \in SEP(\Gamma)$, then we also know $(1\,2),(1\,3),$ and $(2\,3)$ are in $SEP(\Gamma)$. For the case where $\sigma$ is a transposition, any proper subcycle is the identity permutation, and if $\sigma$ consists of a collection of disjoint cycles, we can apply this to each of the cycles separately.

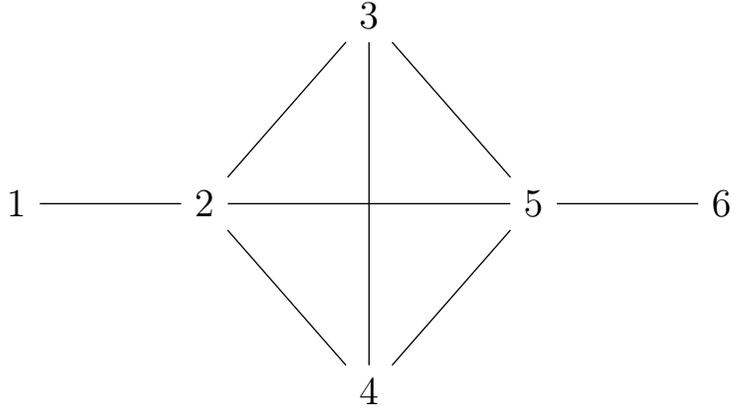
\begin{figure}
    \centering
    \scalebox{1.25}{
 \begin{tikzpicture}   
\node (n1) at (1,5) {1};
  \node (n2) at (3,5)  {2};
  \node (n3) at (4.75,7)  {3};
  \node (n4) at (4.75,3) {4};
  \node (n5) at (6.5,5)  {5};
  \node (n6) at (8.5,5) {6};

\foreach \from/\to in {n1/n2,n2/n3,n2/n4,n2/n5,n3/n4,n3/n5,n4/n5,n5/n6}
    \draw (\from) -- (\to);
   
   \end{tikzpicture}}
    \caption{An example demonstrating some differences between $SEP(\Gamma)$ and Aut$(\Gamma)$.}
    \label{sspg example fig}
\end{figure}

\begin{proof}
The theorem is clearly true for the case where the proper sub-cycles are transpositions, given that $SEP(\Gamma)$ is generated by a set of transpositions. Any other proper sub-cycle $\tau$ must also be an element of a unique $S_{\gamma_i,i}$, so $\tau \in SEP(\Gamma)$ by Theorem \ref{SP theorem}. Similar reasoning can be applied to the disjoint cycles composing $\sigma$.
\end{proof}

In the following section, we will consider some properties of prime graphs of the SEP groups of graphs.

\section{Prime Graphs of SEP Groups of Graphs}

Recall the definition of the prime graph of a group. For a group $G$, we define the prime graph of $G$ as the graph $\Gamma_G$, where $V(\Gamma_G)$ is the set of prime numbers dividing the order of $G$ and, for any two of these primes $p$ and $q$, $pq \in E(\Gamma_G)$ if and only if there is an element in $G$ of order $pq$. Next, we will need to define some new notation. We let $\tilde{\Gamma}^{(0)}$ be $\Gamma$ and $\tilde{\Gamma}^{(i)}$ be the prime graph of $SEP(\tilde{\Gamma}^{(i-1)})$. When we are only concerned with the first prime graph of $SEP(\Gamma)$ for some graph $\Gamma$, we will write $\tilde{\Gamma}$ instead of $\tilde{\Gamma}^{(1)}$. We will also adjust some of our notation for the permutation groups on the equivalence classes of a graph. The $S_{\gamma_i,i}$ such that $\gamma_i$ is the greatest shall be denoted as $S_{\alpha}$, and we will use $S_{\beta}$ to denote the $S_{\gamma_i,i}$ on the second largest equivalence class. Thus, for graphs $\Gamma$ with at least two equivalence classes, we can write 
\[
SEP(\Gamma) \cong S_{\alpha} \times S_{\beta} \times \prod_{i=1}^{s(\Gamma)-2} S_{\gamma_i,i}.
\]

\begin{theoremN}
\label{edges in tilde gamma graph}
If a graph $\Gamma$ has only one (structural) equivalence class on $\alpha$ vertices, then $V(\tilde{\Gamma})$ is the set of primes that are less than or equal to $\alpha$, and for any two of these primes $p,q \in V(\tilde{\Gamma})$, $pq \in E(\tilde{\Gamma})$ if and only if $p+q \leq \alpha$. If $\Gamma$ has at least two equivalence classes, such that $\alpha$ and $\beta$ are the sizes of the largest and second smallest equivalence classes, then $V(\tilde{\Gamma})$ is still the set of primes that are less than or equal to $\alpha$, but for two such primes, $p$ and $q$, $pq \in E(\tilde{\Gamma})$ if and only if $p+q \leq \alpha$ or, if $p>q$, $p \leq \alpha$ and $q \leq \beta$.  
\end{theoremN}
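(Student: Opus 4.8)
The plan is to exploit the decomposition $SEP(\Gamma) \cong \prod_{i=1}^{s(\Gamma)} S_{\gamma_i,i}$ supplied by Theorem \ref{SP theorem}, together with two elementary facts about orders: the order of a permutation is the least common multiple of its cycle lengths, and the order of an element $(\sigma_1,\dots,\sigma_k)$ of a direct product is the lcm of the orders of the $\sigma_i$. Since $|SEP(\Gamma)| = \prod \gamma_i!$ and the largest factor is $\alpha!$, the primes dividing the group order are exactly the primes $\le \alpha$ (no $\gamma_i!$ can contribute a prime exceeding $\alpha$, and $\alpha!$ contributes all primes up to $\alpha$). This settles the claim about $V(\tilde\Gamma)$ in both cases simultaneously, so the remaining work is entirely about the edge condition.

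For the single-class case, $SEP(\Gamma)\cong S_\alpha$, and an edge $pq$ between distinct primes $p,q\le\alpha$ is present iff $S_\alpha$ has an element of order $pq$. The cycle lengths of such an element must divide $pq$ and have lcm equal to $pq$, so the element is built either from a disjoint $p$-cycle and $q$-cycle (costing $p+q$ moved points) or from a single $pq$-cycle (costing $pq$ points). Because $(p-1)(q-1)\ge 1$ forces $p+q\le pq$, the disjoint-cycle realization is never more expensive, and hence such an element exists iff $\alpha\ge p+q$. This is precisely the Case 1 criterion.

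For the multi-class case I would classify when $\prod S_{\gamma_i,i}$ has an element of order $pq$ via the direct-product fact. Such an element exists iff either (i) a single factor $S_{\gamma_i,i}$ already contains one, i.e. $\gamma_i\ge p+q$ for some $i$, equivalently $\alpha\ge p+q$; or (ii) two distinct factors split the work, one supplying an element whose order is divisible by $p$ and another, disjoint factor supplying one divisible by $q$, so that the overall lcm is $pq$. Writing $p>q$ without loss of generality, I would show (ii) holds exactly when the largest class can absorb $p$ and the second-largest can absorb $q$: placing the larger prime in the larger class is optimal, so the forward direction is clear, and conversely (ii) requires one class of size $\ge p$ and a second distinct class of size $\ge q$, whence at least two classes have size $\ge q$, forcing $\beta\ge q$ while $\alpha\ge p$ is immediate. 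Combining (i) and (ii) gives the stated disjunction that $pq\in E(\tilde\Gamma)$ iff $p+q\le\alpha$, or $p\le\alpha$ and $q\le\beta$.

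The main obstacle I anticipate is the combinatorial bookkeeping in case (ii): carefully justifying that the greedy assignment of the larger prime to the larger equivalence class is optimal, and that no alternative distribution of $p$ and $q$ across the factors succeeds once $\alpha<p$ or $\beta<q$. I would also verify the boundary overlaps — for instance when $\alpha=\beta$, or when both clauses of the disjunction hold at once — to confirm that the two stated conditions together capture precisely the edge set, and to absorb the harmless redundancy that $p\le\alpha$ holds automatically for every vertex prime.
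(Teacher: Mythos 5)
Your proposal is correct and follows essentially the same route as the paper: decompose $SEP(\Gamma)$ via Theorem \ref{SP theorem}, read off $V(\tilde{\Gamma})$ from $|SEP(\Gamma)|=\prod\gamma_i!$, and characterize elements of order $pq$ by their cycle structure across the symmetric factors. Your write-up is in fact somewhat more careful than the paper's, which asserts without justification that disjoint $p$- and $q$-cycles are ``the only way'' to obtain order $pq$ and only verifies sufficiency of the second condition in the multi-class case, whereas you explicitly rule out the $pq$-cycle via $p+q\le pq$ and prove necessity through the exhaustive case split (i)/(ii).
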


\begin{proof}
Suppose first that $\Gamma$ has only one equivalence class. Then $SEP(\Gamma) = S_{\alpha}$, where $\alpha$ is the size of this equivalence class, as expressed in the theorem. First, we observe 
\[
V(\tilde{\Gamma}) = \{p : p \,\textrm{is prime and} \,p \leq \alpha\}
\]
because $|SEP(\Gamma)| = \alpha!$, so the primes that divide $|SEP(\Gamma)|$ are precisely the primes less than or equal to $\alpha$. If $p,q \leq \alpha$ and are primes, then the only way to get an element of order $pq$ in $SEP(\Gamma)$ is if we can compose two disjoint cycles of sizes $p$ and $q$, which we can do if and only if $p+q\leq \alpha$. 
\\
\\
Next, suppose $\Gamma$ has at least two equivalence classes. Then, we can say
\[
SEP(\Gamma) \cong S_{\alpha} \times S_{\beta} \times \prod_{i=1}^{s(\Gamma)-2} S_{\gamma_i,i}.
\]
It is clear that reasoning from above also applies to this case, so all we have left to show is that for $p,q \in V(\tilde{\Gamma})$, even if $p+q > \alpha$, $pq$ can still be an edge in $\tilde{\Gamma}$, given that $p\leq \alpha$ and $q \leq \beta$ (where $p> q$). This follows from the fact that $S_{\alpha}$ and $S_{\beta}$ are permutation groups on different equivalence classes. If there is a cycle of size $p$ in $S_{\alpha}$ and a cycle of size $q$ in $S_{\beta}$, then the theorem follows.
\end{proof}

In the next theorem, we will characterize $\tilde{\Gamma}$ graphs by the presence $k$-cliques, which will also enable us to determine for which graphs the prime graph of $SEP(\Gamma)$ is complete. First, we will let $p_n$ denote the $n$th prime number.

\begin{theoremN}
\label{tilde gamma cliques}
If $SEP(\Gamma) \cong S_{\alpha} \times S_{\beta} \times \prod_{i=1}^{s(\Gamma)-2} S_{\gamma_i,i}$, then $\tilde{\Gamma}$ has a $k$-clique if and only if $\alpha \geq p_k+p_{k-1}$ or $\alpha \geq p_k$ and $\beta \geq p_{k-1}$. If $SEP(\Gamma) = S_{\alpha}$, then $\tilde{\Gamma}$ has a $k$-clique if and only if $\alpha \geq p_k+p_{k-1}$.
\end{theoremN}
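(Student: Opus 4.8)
The plan is to reduce the existence of a $k$-clique to a single inequality involving the two largest primes of the clique, feeding the edge characterization of Theorem \ref{edges in tilde gamma graph} into this reduction. The one elementary fact I will use throughout is that if $q_1 < q_2 < \cdots < q_k$ are $k$ distinct primes, then $q_i \geq p_i$ for every $i$; indeed $q_1,\dots,q_i$ are $i$ distinct primes not exceeding $q_i$, so there are at least $i$ primes $\leq q_i$, forcing $q_i \geq p_i$. In particular $q_{k-1} \geq p_{k-1}$ and $q_k \geq p_k$, which is exactly what links an arbitrary clique back to the bounds stated in terms of $p_k$ and $p_{k-1}$.

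First I would dispatch the single-class case $SEP(\Gamma) = S_\alpha$, where by Theorem \ref{edges in tilde gamma graph} the only edge rule is $pq \in E(\tilde{\Gamma})$ iff $p+q \leq \alpha$. For the ``if'' direction, assuming $\alpha \geq p_k + p_{k-1}$, I claim $\{p_1,\dots,p_k\}$ is a $k$-clique: for any pair $p_i < p_j$ with $j \leq k$ we have $p_i + p_j \leq p_{k-1} + p_k \leq \alpha$, so every pair is joined (and $p_k \leq \alpha$ guarantees these are genuine vertices). For ``only if'', given a clique $q_1 < \cdots < q_k$, the edge between its two largest vertices forces $q_{k-1} + q_k \leq \alpha$, whence $\alpha \geq q_{k-1} + q_k \geq p_{k-1} + p_k$.

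Next I would treat the general case $SEP(\Gamma) \cong S_\alpha \times S_\beta \times \prod_{i=1}^{s(\Gamma)-2} S_{\gamma_i,i}$, where an edge between primes $p>q$ exists iff $p+q \leq \alpha$, or $p \leq \alpha$ and $q \leq \beta$. For ``if'' I split on the two stated alternatives: if $\alpha \geq p_k + p_{k-1}$ the single-class argument already yields the clique $\{p_1,\dots,p_k\}$ through the first edge rule; if instead $\alpha \geq p_k$ and $\beta \geq p_{k-1}$, then for any pair $p_i < p_j$ in $\{p_1,\dots,p_k\}$ the larger satisfies $p_j \leq p_k \leq \alpha$ and the smaller satisfies $p_i \leq p_{k-1} \leq \beta$, so the second edge rule applies and $\{p_1,\dots,p_k\}$ is again a clique. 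For ``only if'', a $k$-clique $q_1 < \cdots < q_k$ immediately gives $\alpha \geq q_k \geq p_k$; I then examine the edge between the two largest vertices $q_{k-1} < q_k$. The first disjunct gives $\alpha \geq q_{k-1}+q_k \geq p_{k-1}+p_k$, while the second disjunct gives $q_{k-1} \leq \beta$, hence $\beta \geq q_{k-1} \geq p_{k-1}$ alongside $\alpha \geq p_k$. Either way one of the two stated alternatives holds.

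The only genuine obstacle is the ``only if'' direction, where I must justify that the two largest primes of the clique are the binding pair and then correctly extract the right alternative from the disjunction in the edge rule, using $q_{k-1} \geq p_{k-1}$ and $q_k \geq p_k$. Once that minimality observation is in place, the ``if'' direction is routine: the $k$ smallest primes simultaneously minimize every relevant sum and every individual size, so they form a clique whenever any $k$-set does.
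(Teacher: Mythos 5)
Your proof is correct and follows essentially the same route as the paper: both arguments reduce the existence of a $k$-clique to the presence of the single edge between the two largest primes involved, using the monotonicity of the edge conditions from Theorem \ref{edges in tilde gamma graph}. Your version is in fact more careful than the paper's, which merely asserts that any $k$-clique may be taken to lie on the first $k$ primes, whereas you justify this reduction via the observation that the $i$th smallest prime in any clique is at least $p_i$.
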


By Theorem \ref{edges in tilde gamma graph}, we can deduce that if $\tilde{\Gamma}$ has a $k$-clique, then this clique is on the first $k$ primes in the labelled prime graph. Now, we will prove the theorem.

\begin{proof}
First, consider the case where $\Gamma$ has at least two equivalence classes, or $SEP(\Gamma) \cong S_{\alpha} \times S_{\beta} \times \prod_{i=1}^{s(\Gamma)-2} S_{\gamma_i,i}$.  By Theorem \ref{edges in tilde gamma graph}, we can deduce that $p_kp_{k-1}\in E(\tilde{\Gamma})$ implies that $p_ip_j \in E(\tilde{\Gamma})$ for all $p_i+p_j\leq p_k+p_{k-1}$. Hence, we just need to find the conditions under which $p_kp_{k-1}\in E(\tilde{\Gamma})$ is true in order to prove that $\tilde{\Gamma}$ has a $k$-clique. However, this is also taken care of by Theorem \ref{edges in tilde gamma graph}, so the first case is proved. The second case follows by similar reasoning.
\end{proof}

\begin{corollaryN}
\label{complete tilde gamma}
$\tilde{\Gamma} = K_n$ for $n>1$ if and only if $SEP(\Gamma) \cong S_{\alpha} \times S_{\beta} \times \prod_{i=1}^{s(\Gamma)-2} S_{\gamma_i,i}$ such that $p_{n+1}> \alpha \geq p_n$ and $\beta \geq p_{n-1}$.
\end{corollaryN}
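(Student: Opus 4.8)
The plan is to characterize the complete prime graph by controlling the vertex set and the edge set of $\tilde{\Gamma}$ separately, leaning entirely on Theorems \ref{edges in tilde gamma graph} and \ref{tilde gamma cliques}. Since $\tilde{\Gamma} = K_n$ means exactly that $\tilde{\Gamma}$ has $n$ vertices \emph{and} contains an $n$-clique, I would split the biconditional into these two requirements and translate each into a condition on $\alpha$ and $\beta$, proving both directions at once where possible.

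First I would pin down the vertex count. By Theorem \ref{edges in tilde gamma graph}, $V(\tilde{\Gamma})$ is exactly the set of primes not exceeding $\alpha$, so $\tilde{\Gamma}$ has precisely $n$ vertices if and only if there are precisely $n$ primes $\leq \alpha$, i.e. $p_n \leq \alpha < p_{n+1}$. This yields the stated condition $p_{n+1} > \alpha \geq p_n$ in both directions simultaneously and requires no further work.

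Next I would handle completeness. Granting that $\tilde{\Gamma}$ has exactly $n$ vertices, it equals $K_n$ if and only if it contains an $n$-clique; by the remark preceding Theorem \ref{tilde gamma cliques} that clique lives on the first $n$ primes, which here are all of the vertices. Theorem \ref{tilde gamma cliques} makes the existence of this $n$-clique equivalent to $\alpha \geq p_n + p_{n-1}$, or $\alpha \geq p_n$ together with $\beta \geq p_{n-1}$. The reverse direction is then immediate: assuming $\alpha \geq p_n$ and $\beta \geq p_{n-1}$ satisfies the second disjunct, so $\tilde{\Gamma}$ contains an $n$-clique and hence equals $K_n$. For the forward direction I would show that the first disjunct $\alpha \geq p_n + p_{n-1}$ cannot occur when $\alpha < p_{n+1}$, which forces $\beta \geq p_{n-1}$.

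The main obstacle is the number-theoretic inequality $p_{n+1} \leq p_n + p_{n-1}$ for all $n \geq 2$, which is precisely what kills the first disjunct: it gives $\alpha \geq p_n + p_{n-1} \geq p_{n+1}$, contradicting $\alpha < p_{n+1}$. I expect to establish this from Bertrand's postulate ($p_{n+1} < 2p_n$) together with a sharper gap estimate of Nagura type for large $n$ and a finite check of the small cases, noting that the bound is tight at $n=2$ where $p_3 = p_2 + p_1 = 5$. The same inequality also rules out the single-class case $SEP(\Gamma) = S_\alpha$, since there Theorem \ref{tilde gamma cliques} demands $\alpha \geq p_n + p_{n-1}$ for an $n$-clique, again impossible under $\alpha < p_{n+1}$ for $n > 1$; thus $\tilde{\Gamma} = K_n$ forces the two-class product form, which completes the equivalence.
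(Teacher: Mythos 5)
Your proposal is correct and follows essentially the same route as the paper: reduce to the clique criterion of Theorem \ref{tilde gamma cliques}, bound the vertex count by $p_n \leq \alpha < p_{n+1}$, and eliminate the disjunct $\alpha \geq p_n + p_{n-1}$ (and the single-class case) via the inequality $p_{n+1} \leq p_n + p_{n-1}$. The only difference is cosmetic: you propose to prove that number-theoretic inequality yourself (Nagura-type bounds plus a finite check), whereas the paper simply cites it.
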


\begin{proof}
Suppose $\tilde{\Gamma}$ is a complete graph on at least two vertices. Thus, it contains a $n$-clique and $\alpha<p_{n+1}$; otherwise, there would be at least $n+1$ vertices. By Theorem \ref{tilde gamma cliques}, $SEP(\Gamma) \cong S_{\alpha} \times S_{\beta} \times \prod_{i=1}^{s(\Gamma)-2} S_{\gamma_i,i}$ such that $p_{n+1} > \alpha \geq p_n+p_{n-1}$ or $p_{n+1}> \alpha \geq p_n$ and $\beta \geq p_{n-1}$, or $SEP(\Gamma)$ is just $S_\alpha$, where $p_{n+1}>\alpha\geq p_n+p_{n-1}$. However, it is known that there is no $n$ such that $p_{n+1}>p_n+p_{n-1}$ (This can be seen, for example, in \cite{mathoverflow_primes}). Thus, we see that the only option for $SEP(\Gamma)$ is $SEP(\Gamma) \cong S_{\alpha} \times S_{\beta} \times \prod_{i=1}^{s(\Gamma)-2} S_{\gamma_i,i}$ such that $p_{n+1}> \alpha \geq p_n$ and $\beta \geq p_{n-1}$. The reverse direction follows immediately from Theorem \ref{tilde gamma cliques} and restriction $p_{n+1}>\alpha$.
\end{proof}

\begin{theoremN}
\label{vertex set of repeated pg of sp decreases}
$\tilde{\Gamma}^{(i)} \subsetneq \tilde{\Gamma}^{(i-1)}$ for all $i$ such that $\tilde{\Gamma}^{(i-1)} \neq \emptyset$.
\end{theoremN}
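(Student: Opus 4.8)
The plan is to reduce the statement to a purely numerical fact about the \emph{number} of vertices produced at each stage of the chain, and then to observe that the vertex sets in question are nested initial segments of the primes. Write $n_j = |V(\tilde{\Gamma}^{(j)})|$ for the number of vertices of the $j$th iterate, let $\alpha_{j-1}$ denote the size of the largest structural equivalence class of the graph $\tilde{\Gamma}^{(j-1)}$, and let $\pi(x)$ denote the number of primes not exceeding $x$. The heart of the argument is a single chain of inequalities establishing $n_j < n_{j-1}$ whenever $\tilde{\Gamma}^{(j-1)} \neq \emptyset$ (equivalently, $n_{j-1} \geq 1$).

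First I would assemble three ingredients. By Theorem~\ref{edges in tilde gamma graph}, the vertex set $V(\tilde{\Gamma}^{(j)})$ is precisely the set of primes $\leq \alpha_{j-1}$, so $n_j = \pi(\alpha_{j-1})$. Since the structural equivalence classes partition the vertex set of $\tilde{\Gamma}^{(j-1)}$, the largest class cannot exceed the total, giving $\alpha_{j-1} \leq n_{j-1}$. Finally, because $1$ is not prime, at most $m-1$ of the integers $1,\dots,m$ are prime, so $\pi(m) \leq m-1 < m$ for every $m \geq 1$. Combining these, using monotonicity of $\pi$, yields
\[
n_j \;=\; \pi(\alpha_{j-1}) \;\leq\; \pi(n_{j-1}) \;<\; n_{j-1},
\]
so the vertex count drops strictly at every step for which the previous iterate is nonempty. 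Note this covers the degenerate case in which every class of $\tilde{\Gamma}^{(j-1)}$ is a singleton: then $\alpha_{j-1}=1$, $SEP(\tilde{\Gamma}^{(j-1)})$ is trivial, and $n_j = \pi(1) = 0$, consistent with the bound.

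It then remains to upgrade this strict decrease in cardinality to the genuine containment $\subsetneq$ (as vertex sets, matching the title of the theorem). For $j \geq 2$, both $V(\tilde{\Gamma}^{(j)})$ and $V(\tilde{\Gamma}^{(j-1)})$ are, by Theorem~\ref{edges in tilde gamma graph}, sets of \emph{all} primes below some bound, i.e. initial segments $\{p_1,\dots,p_{n_j}\}$ and $\{p_1,\dots,p_{n_{j-1}}\}$ of the prime sequence; since $n_j < n_{j-1}$, the former is a proper subset of the latter, which is exactly $\tilde{\Gamma}^{(j)} \subsetneq \tilde{\Gamma}^{(j-1)}$. Because $(n_j)_j$ is a strictly decreasing sequence of nonnegative integers, it reaches $0$ in finitely many steps, so the same argument simultaneously shows every such chain terminates.

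The one delicate point — and the step I would treat most carefully — is the base case $j=1$, where $\tilde{\Gamma}^{(0)} = \Gamma$ is an arbitrary graph rather than a prime graph, so the set of primes $V(\tilde{\Gamma}^{(1)})$ need not sit inside $V(\Gamma)$ as literal sets. Here I would read $\subsetneq$ through the cardinality bound $n_1 = \pi(\alpha_0) \leq \pi(n_0) < n_0$ proved above, observing that from the first iterate onward the containment is literal; alternatively one states the honest containment only for $j \geq 2$ and records the $j=1$ step purely as the strict drop in vertex count. Beyond this interpretive point, the proof is a routine unwinding of Theorem~\ref{edges in tilde gamma graph} together with the elementary inequality $\pi(m) < m$.
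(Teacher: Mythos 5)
Your treatment of the vertex sets is essentially the paper's own argument: both reduce to the observation that $V(\tilde{\Gamma}^{(i)})$ is the set of primes at most $\alpha^{i-1}\leq |V(\tilde{\Gamma}^{(i-1)})|$, combined with the elementary fact that fewer than $m$ of the integers $1,\dots,m$ are prime, and your explicit handling of the base case $i=1$ (where $V(\Gamma)$ need not be a set of primes, so the containment must be read through cardinality) is if anything more careful than the paper, which silently asserts $\{p : p\ \text{prime},\ p\leq |V(\tilde{\Gamma}^{(i-1)})|\}\subsetneq V(\tilde{\Gamma}^{(i-1)})$ even at that step.

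However, there is a genuine gap: the theorem asserts $\tilde{\Gamma}^{(i)}\subsetneq\tilde{\Gamma}^{(i-1)}$ as \emph{graphs}, and you prove only the vertex-set containment, never that $E(\tilde{\Gamma}^{(i)})\subseteq E(\tilde{\Gamma}^{(i-1)})$. This is not automatic from having a smaller vertex set. By Theorem \ref{edges in tilde gamma graph}, adjacency in a prime graph of an SEP group is governed by the sizes $\alpha$ and $\beta$ of the two largest equivalence classes of the underlying graph, and these data change from one iterate to the next; in particular $\tilde{\Gamma}^{(i)}$ may have more (or differently sized) equivalence classes than $\tilde{\Gamma}^{(i-1)}$, so an edge $pq$ could a priori arise in $\tilde{\Gamma}^{(i)}$ via the clause ``$p\leq\alpha^{i}$ and $q\leq\beta^{i}$'' while failing both clauses for $\tilde{\Gamma}^{(i-1)}$. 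The paper devotes the entire second half of its proof to ruling this out: assuming $pq\in E(\tilde{\Gamma}^{(i)})\setminus E(\tilde{\Gamma}^{(i-1)})$, it splits into the cases $p+q\leq\alpha^{i}$ and ($p\leq\alpha^{i}$, $q\leq\beta^{i}$), and in each case deduces $p+q\leq\alpha^{i-1}$ --- using $\alpha^{i}<\alpha^{i-1}$ in the first case and $\alpha^{i}+\beta^{i}\leq |\{p : p\ \text{prime},\ p\leq\alpha^{i-1}\}|\leq\alpha^{i-1}$ in the second --- contradicting $pq\notin E(\tilde{\Gamma}^{(i-1)})$. You need to supply this (or an equivalent) argument for the edge sets before the claimed containment of graphs is established; the closing remark about termination of the chain is fine but belongs to the subsequent corollary rather than to this theorem.
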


\begin{proof}
First, we will show $V(\tilde{\Gamma}^{(i)}) \subsetneq V(\tilde{\Gamma}^{(i-1)})$. Suppose $|V(\tilde{\Gamma}^{(i-1)})|>1$. We note $SEP(\tilde{\Gamma}^{(i-1)})$ is isomorphic to the direct product of symmetric groups on the vertices in the equivalence classes of $\tilde{\Gamma}^{(i-1)}$. Because $|SEP(\tilde{\Gamma}^{(i-1)})| = \prod_{i=1}^{s(\tilde{\Gamma}^{(i-1)})} \alpha_i!$ by Theorem \ref{SP theorem}, we note 
\[V(\tilde{\Gamma}^{(i)}) \subseteq \{p : p\, \textrm{is prime and}\, p \leq |V(\tilde{\Gamma}^{(i-1)})|\} \subsetneq V(\tilde{\Gamma}^{(i-1)}).\]
(Note that the first set inclusion is an equality in the case that $\tilde{\Gamma}^{(i-1)}$ is complete.) Thus, we have found $V(\tilde{\Gamma}^{(i)}) \subsetneq V(\tilde{\Gamma}^{(i-1)})$. Note that in the special case where $|V(\tilde{\Gamma}^{(i-1)})|=1$, $\{p : p\, \textrm{is prime and}\, p \leq |V(\tilde{\Gamma}^{(i-1)})|\} = \emptyset$, so $V(\tilde{\Gamma}^{(i)})$ is also empty. If $\tilde{\Gamma}^{(i-1)} = \emptyset$, then it makes no sense to consider $\tilde{\Gamma}^{(i)}$ (although, we could say that, in a trivial sense, $\tilde{\Gamma}^{(i)} = \emptyset$, the empty graph).
\\
\\
All we have left to prove is that the edge set of $\tilde{\Gamma}^{(i)}$ is contained in the edge set of $\tilde{\Gamma}^{(i-1)}$. Consider the two graphs as labelled prime graphs and assume that there is some edge $pq \in E(\tilde{\Gamma}^{(i)})$ such that $pq \not\in E(\tilde{\Gamma}^{(i-1)})$. Because we have seen $V(\tilde{\Gamma}^{(i)}) \subsetneq V(\tilde{\Gamma}^{(i-1)})$, it follows from Theorem \ref{edges in tilde gamma graph} that the largest equivalence class in $\tilde{\Gamma}^{(i)}$ has fewer vertices than the largest equivalence class in $\tilde{\Gamma}^{(i-1)}$. Let $S_{\alpha^i}$ and $S_{\alpha^{(i-1)}}$ be the permutation groups on these largest equivalence classes for $\tilde{\Gamma}^{(i)}$ and $\tilde{\Gamma}^{(i-1)}$, respectively. Define $S_{\beta^i}$ and $S_{\beta^{i-1}}$ similarly for the permutation groups on the second largest equivalence classes, if they exist. Hence, we have found $\alpha^i < \alpha^{i-1}$. Also, by Theorem \ref{edges in tilde gamma graph}, we know that $V(\tilde{\Gamma}^{(i)}) = \{p: p \, \textrm{is prime and}\, p\leq \alpha^{(i-1)}$\}. Because equivalence classes partition the vertex set of a graph, we can say $\alpha^i + \beta^i \leq |\{p: p \, \textrm{is prime and}\, p\leq \alpha^{i-1}\}|$. Next, we can use Theorem \ref{edges in tilde gamma graph} again to determine that either $p+q \leq \alpha^i$ or $p\leq \alpha^i$ and $q \leq \beta^i$ (where $p> q$) because $pq \in E(\tilde{\Gamma}^{(i)})$. First, suppose $p+q \leq \alpha^i$. Then, we find $p+q < \alpha^{i-1}$ because $\alpha^i<\alpha^{i-1}$. This is a contradiction, though, because this implies $pq \in E(\tilde{\Gamma}^{(i-1)})$. Next, suppose $p\leq \alpha^i$ and $q \leq \beta^i$. We find $p+q \leq \alpha^i + \beta^i \leq |\{p: p \, \textrm{is prime and}\, p\leq \alpha^{i-1}\}| \leq \alpha^{i-1}$, so we have another contradiction. Thus, if $pq \in E(\tilde{\Gamma}^{(i)})$, then $pq \in E(\tilde{\Gamma}^{(i-1)})$, which completes the proof.
\end{proof}

With this observation, the next result follows immediately.

\begin{corollaryN}
There is no graph $\Gamma$ such that $\Gamma \cong \tilde{\Gamma}$. 
\end{corollaryN}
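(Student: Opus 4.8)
The plan is to derive this corollary directly from Theorem \ref{vertex set of repeated pg of sp decreases} by a cardinality argument. First I would fix a graph $\Gamma$ with at least one vertex and apply that theorem with $i=1$, recalling the conventions $\tilde{\Gamma}^{(0)}=\Gamma$ and $\tilde{\Gamma}^{(1)}=\tilde{\Gamma}$. Since $\tilde{\Gamma}^{(0)}=\Gamma\neq\emptyset$, the hypothesis of the theorem is satisfied, and it yields $\tilde{\Gamma}\subsetneq\Gamma$.

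The crux is what this proper containment says about vertex counts. The proof of Theorem \ref{vertex set of repeated pg of sp decreases} establishes $V(\tilde{\Gamma}^{(1)})\subsetneq V(\tilde{\Gamma}^{(0)})$ as its very first step, so in particular $V(\tilde{\Gamma})$ is a proper subset of $V(\Gamma)$ and hence $|V(\tilde{\Gamma})|<|V(\Gamma)|$. Any graph isomorphism restricts to a bijection of vertex sets, so isomorphic graphs must have the same number of vertices. The strict inequality therefore rules out $\Gamma\cong\tilde{\Gamma}$, and since $\Gamma$ was arbitrary the corollary follows.

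The only point I would treat with care is the degenerate empty graph, for which the hypothesis $\tilde{\Gamma}^{(i-1)}\neq\emptyset$ of the preceding theorem fails: here $SEP(\emptyset)$ is trivial, its prime graph is again empty, and so the empty graph is vacuously a fixed point of the operation $\Gamma\mapsto\tilde{\Gamma}$. I would simply note that the statement is understood to range over nonempty graphs, excluding this degenerate case explicitly. Beyond this bookkeeping there is no substantive obstacle; the entire weight of the argument rests on the strictly decreasing vertex count supplied by Theorem \ref{vertex set of repeated pg of sp decreases}.
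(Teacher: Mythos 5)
Your proposal is correct and follows the paper's route exactly: the paper derives this corollary immediately from Theorem \ref{vertex set of repeated pg of sp decreases}, and your cardinality argument ($V(\tilde{\Gamma})\subsetneq V(\Gamma)$ forces $|V(\tilde{\Gamma})|<|V(\Gamma)|$, which is incompatible with isomorphism) is just a careful spelling-out of that same step, together with a reasonable remark about the degenerate empty graph.
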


Embedded in this statement is the fact that there is no $i$ such that $\tilde{\Gamma}^{(i-1)} \cong \tilde{\Gamma}^{(i)}$ for some "initial graph" $\tilde{\Gamma}^{(0)} = \Gamma$. This is because we can just let $\Gamma$ be $\tilde{\Gamma}^{(i-1)}$.

\bigskip
\noindent
\textbf{Definition.}
Let $\Gamma = \tilde{\Gamma}^{(0)} \supsetneq \tilde{\Gamma}^{(1)} \supsetneq ... \supsetneq \tilde{\Gamma}^{(i)} \supsetneq ... \supsetneq \tilde{\Gamma}^{(n)} \supsetneq \emptyset$ be the \textbf{SEP $\Gamma$-series}. If $\tilde{\Gamma}^{(n)}$ is the final element in the series before the empty graph, we say that $\tilde{\Gamma}^{(n)}$ is the \textbf{minimal element} in the SEP $\Gamma$-series and $n$ is the \textbf{length} of the SEP $\Gamma$-series. 

\begin{theoremN}
\label{minimality in SP series}
Every SP $\Gamma$-series has a minimal element (i.e. has a finite length).
\end{theoremN}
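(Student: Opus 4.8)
The plan is to deduce finiteness from a monovariant, namely the number of vertices in the iterated prime graphs. The engine is Theorem~\ref{vertex set of repeated pg of sp decreases}, which already guarantees that whenever $\tilde{\Gamma}^{(i-1)} \neq \emptyset$ we have $\tilde{\Gamma}^{(i)} \subsetneq \tilde{\Gamma}^{(i-1)}$, and in particular $V(\tilde{\Gamma}^{(i)}) \subsetneq V(\tilde{\Gamma}^{(i-1)})$. Since every such vertex set is a finite set of primes (bounded above by the size of the largest equivalence class), proper containment forces the strict inequality $|V(\tilde{\Gamma}^{(i)})| < |V(\tilde{\Gamma}^{(i-1)})|$.

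First I would record the resulting chain of inequalities
\[
|V(\tilde{\Gamma}^{(0)})| > |V(\tilde{\Gamma}^{(1)})| > \cdots > |V(\tilde{\Gamma}^{(i)})| > \cdots,
\]
valid for as long as the graphs remain nonempty. Each term is a nonnegative integer, so this is a strictly decreasing sequence in $\mathbb{N}$. By the well-ordering principle (equivalently, by infinite descent), such a sequence cannot be infinite; it must reach a smallest value after finitely many steps.

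Then I would translate termination of this sequence into the existence of a minimal element of the SEP $\Gamma$-series. Concretely, let $n$ be the largest index for which $\tilde{\Gamma}^{(n)} \neq \emptyset$; this exists because the counts strictly decrease from the finite value $|V(\Gamma)|$ and cannot drop below $0$. Since $|V(\tilde{\Gamma}^{(i)})|$ falls by at least one at each step, one even obtains the crude bound $n \leq |V(\Gamma)|$. By construction $\tilde{\Gamma}^{(n)}$ is the last nonempty graph before the empty graph, which is exactly the minimal element in the definition, and $n$ is the (finite) length of the series.

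I do not expect a genuine obstacle here, since Theorem~\ref{vertex set of repeated pg of sp decreases} supplies precisely the strict decrease that makes the descent argument work. The only point needing a little care is the handling of the degenerate stages: when $|V(\tilde{\Gamma}^{(i-1)})| = 1$ the next graph is already empty, and once the empty graph is reached the process stops. Ensuring that the ``smallest value'' of the monovariant is identified with the last \emph{nonempty} graph, rather than with the empty graph itself, is the only bookkeeping required to match the stated notions of minimal element and length.
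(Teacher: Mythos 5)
Your proposal is correct and follows essentially the same route as the paper: both derive termination from the strict containments of Theorem~\ref{vertex set of repeated pg of sp decreases} together with well-ordering on a decreasing sequence of finite sets. Your version merely spells out the descent on $|V(\tilde{\Gamma}^{(i)})|$ and the bookkeeping identifying the last nonempty graph, which the paper leaves implicit.
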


\begin{proof}
By Theorem \ref{vertex set of repeated pg of sp decreases} and well-ordering, we know that there is an $i$ such that $\tilde{\Gamma}^{(i)}$ is minimal in the poset of the graphs $\tilde{\Gamma}^{(j)}$ with respect to $\supset$. The theorem follows.
\end{proof}

\section{The Connection Between Structural Equivalence and rank$(I+A(\Gamma))$}

Now, we will look at how structurally equivalent vertices affect the adjacency matrix of a graph. We will use the standard notation, $A(\Gamma)$, to denote the adjacency matrix of a graph $\Gamma$. The following theorem, regarding edges within equivalence classes, can be found in \cite{ucla}.


\begin{theoremN}
\label{big supersimilar theorem}
If a class of structurally equivalent vertices has an edge, then the induced subgraph on the equivalence class of vertices is complete. 
\end{theoremN}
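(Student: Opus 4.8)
The plan is to fix a structural equivalence class $C$ together with a single edge $\{u,v\}$ having $u,v\in C$, and then to show that every pair of vertices of $C$ is joined by an edge, so that the induced subgraph is complete. The engine of the argument is Theorem \ref{neighborhoods}, which records exactly how the first neighborhoods of two structurally equivalent vertices compare. I would first isolate the two cases it gives: if two structurally equivalent vertices $u,w$ are adjacent, then $N^1(u)\setminus\{w\}=N^1(w)\setminus\{u\}$; and if they are nonadjacent, then the transposition $(u\,w)$ fixes every other vertex, so swapping preserves adjacency outright and the neighborhoods coincide exactly, $N^1(u)=N^1(w)$. Having these two statements cleanly stated is what makes the rest routine.

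The first real step is an intermediate claim: the chosen vertex $u$ is adjacent to \emph{every} other vertex of $C$. I would take an arbitrary $w\in C\setminus\{u,v\}$ and argue by contradiction, supposing $w$ is not adjacent to $u$. Since $u$ and $w$ are structurally equivalent and nonadjacent, the nonadjacent case gives $N^1(u)=N^1(w)$; because $v\in N^1(u)$ (as $\{u,v\}$ is an edge), this forces $v\in N^1(w)$, i.e.\ $w$ is adjacent to $v$. Now $w$ and $v$ are structurally equivalent and adjacent, so the adjacent case yields $N^1(w)\setminus\{v\}=N^1(v)\setminus\{w\}$. Since $u\in N^1(v)$ and $u\neq w$, we have $u\in N^1(v)\setminus\{w\}=N^1(w)\setminus\{v\}$, and as $u\neq v$ this gives $u\in N^1(w)$, contradicting the assumption that $w$ is not adjacent to $u$. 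Hence $u$ is adjacent to all of $C$.

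The final step upgrades this to full completeness. Given any two vertices $w_1,w_2\in C$, the intermediate claim shows both are adjacent to $u$, so $\{u,w_1\}$ is itself an edge lying inside $C$; re-running the intermediate claim with this edge in place of $\{u,v\}$ shows that $w_1$ is adjacent to every vertex of $C$, in particular to $w_2$. Thus every pair in $C$ is adjacent and the induced subgraph is complete, with the degenerate cases $|C|=1$ (no edge) and $|C|=2$ (the edge is already $K_2$) being trivial. I expect the only delicate point to be the bookkeeping with the set-minus operations: one must be careful that when deducing membership of $u$ (or $v$) in a neighborhood, the very vertex in question is not the one being removed by the ``$\setminus$'', which is precisely why keeping the adjacent and nonadjacent cases of Theorem \ref{neighborhoods} separate, and tracking the inequalities $u\neq w$ and $u\neq v$, is the crux of the write-up.
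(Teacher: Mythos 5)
Your proof is correct, but note that the paper itself does not prove this theorem at all --- it is stated without proof and attributed to \cite{ucla} --- so there is no in-paper argument to compare against. Your argument stands on its own: the two cases you extract from the automorphism condition (for structurally equivalent $u,w$, adjacency gives $N^1(u)\setminus\{w\}=N^1(w)\setminus\{u\}$ and non-adjacency gives $N^1(u)=N^1(w)$) are exactly right, and they follow directly from $(u\,w)\in\Aut{\Gamma}$ without needing the connectivity hypotheses of Theorem \ref{neighborhoods}. The contradiction in your intermediate claim is sound: if $w\not\sim u$ then $v\in N^1(u)=N^1(w)$ forces $w\sim v$, and then $u\in N^1(v)\setminus\{w\}=N^1(w)\setminus\{v\}$ forces $w\sim u$. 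Your bookkeeping of the removed vertices is careful where it needs to be. One small simplification for the last step: once $u$ is adjacent to all of $C$, you need not re-run the whole claim; for distinct $w_1,w_2\in C\setminus\{u\}$, adjacency of $u$ and $w_1$ gives $w_2\in N^1(u)\setminus\{w_1\}=N^1(w_1)\setminus\{u\}$ directly, so $w_1\sim w_2$ in one line. Your proof also implicitly uses that all pairs within a class are structurally equivalent, which is justified by the paper's Theorem 2.2 (structural equivalence is an equivalence relation); it would be worth citing that explicitly.
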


\bigskip
Any graph $\Gamma$ can be expressed as a new graph where the vertices represent complete induced subgraphs of $\Gamma$ and if there is an edge between any two of these complete induced subgraphs $K_a$ and $K_b$, this indicates that all the vertices in $K_a$ are completely connected with the all the vertices in $K_b$. In other words, the edges denote the existence of induced complete bipartite graphs between two complete graphs (which is ultimately a larger complete graph). Note that for many graphs, if we try to reconfigure them in this way, we will find that every vertex is just $K_1$ and every edge only represents one edge in $\Gamma$, which means that this reconfiguration of $\Gamma$ is isomorphic to $\Gamma$. Such a reconfiguration of $\Gamma$ would, thus, be considered trivial. However, there is a considerable number of graphs such that this form of reconfiguration is non-trivial. Next, we will provide a definition and an example to clarify.

\bigskip
\noindent
\textbf{Definition.} The \textbf{complete skeleton} of a graph $\Gamma$, denoted $\Omega(\Gamma)$, is the smallest reconfiguration of $\Gamma$ such that the vertices of $\Omega(\Gamma)$ represent complete induced subgraphs of $\Gamma$ and the edges represent the complete connection of edges between the two complete induced subgraphs that it connects.

\begin{figure}
    \centering
    \begin{tikzpicture}
    \node (n33) at (11,8.2) {};
    \node (n34) at (11,5.3) {};
     \node (n35) at (11.5,6.75) {};
    \node (n36) at (12.625,7.8) {};
    \node (n37) at (12.625,5.7) {};
    \node (n38) at (13.75,6.75) {};
    \node (n39) at (14.25,8.2) {};
    \node (n40) at (14.25,5.3) {};
    
    \node (n1) at (16.75,6.75) {$K_3$};
    \node (n2) at (18,8.2) {$K_1$};
    \node (n3) at (18,5.3) {$K_1$};
    \node (n4) at (19.25, 6.75) {$K_3$};
    
    \draw [fill=black] (11,8.2) circle (3.5pt);
      \draw [fill=black] (11,5.3) circle (3.5pt);
      \draw [fill=black] (11.5,6.75) circle (3.5pt);
      \draw [fill=black] (12.625,7.8) circle (3.5pt);
       \draw [fill=black] (12.625,5.7) circle (3.5pt);
      \draw [fill=black] (13.75,6.75) circle (3.5pt);
      \draw [fill=black] (14.25,8.2) circle (3.5pt);
      \draw [fill=black] (14.25,5.3) circle (3.5pt);
      
       \foreach \from/\to in {n33/n34,n33/n35,n34/n35,n38/n39,n38/n40,n39/n40,n36/n33,n36/n34,n36/n35,n36/n38,n36/n39,n36/n40,n37/n33,n37/n34,n37/n35,n37/n38,n37/n39,n37/n40,n1/n2,n2/n4,n4/n3,n3/n1}
    \draw (\from) -- (\to);
    \end{tikzpicture}
    \caption{An example of a graph $\Gamma$ and its complete skeleton $\Omega(\Gamma)$.}
    \label{first complete skeleton ex.}
\end{figure}
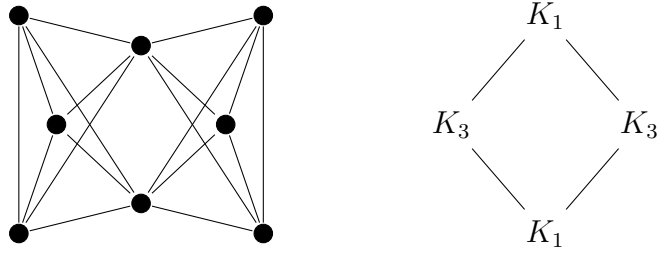
\bigskip
The complete skeleton of a graph $\Gamma$, as defined above, is very closely related with the compression graph of $\Gamma$. Compression graphs have been considered previously (see \cite{ucla}), but we will approach this problem in a slightly different way. 

\bigskip
Figure \ref{first complete skeleton ex.} provides an example to help visualize what complete skeletons of graphs may look like. The reader is encouraged to convince him- or herself that the example is, in fact, true before moving on. In some of the following theorems, we will draw the connection between complete skeletons and equivalence classes. Before doing so, though, it will be helpful to define one more term.

\bigskip
\noindent
\textbf{Definition.} For any reconfiguration of a graph $\Gamma$ via the reinterpretation of vertices and edges given above, which we will write as $R\Gamma$, we say that two vertices $K_a$ and $K_b$ in $R\Gamma$ can be \textbf{conflated} if and only if they are connected and $N^1(K_a)\setminus\{K_b\} = N^1(K_b)\setminus\{K_a\}$. The conflation of $K_a$ and $K_b$ results in a single vertex $K_{a+b}$, where all the vertices incident to $K_{a+b}$ are the edges that were incident to $K_a$ or $K_b$, excluding the edge between the two.

\bigskip
By this new definition, we see that $R\Gamma = \Omega(\Gamma)$ if and only if no vertices in $R\Gamma$ can be conflated.

\begin{theoremN}
\label{complete skeleton theorem}
The vertices of $\Omega(\Gamma)$ are the equivalence classes of $\Gamma$, with one possible exception. If a collection of independent vertices are $K_1$ and share the same neighbors, they form a completely disconnected equivalence class.
\end{theoremN}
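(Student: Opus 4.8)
The plan is to characterize the vertices (blocks) of an arbitrary reconfiguration $R\Gamma$ in terms of structural equivalence, and then identify the coarsest such reconfiguration with $\Omega(\Gamma)$. First I would record the two facts I intend to lean on: by Theorem \ref{big supersimilar theorem} every equivalence class is either a clique or an independent set, and by Theorem \ref{neighborhoods} two adjacent vertices are structurally equivalent precisely when they have the same neighbors apart from each other. I would also note that any faithful reconfiguration connects two distinct complete induced subgraphs in an all-or-nothing fashion: if there is no edge between blocks $K_a$ and $K_b$ in $R\Gamma$, then no vertex of $K_a$ is adjacent to any vertex of $K_b$ in $\Gamma$.

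The key step is to show that in \emph{any} reconfiguration $R\Gamma$, each vertex $K_a$ (viewed as a set of vertices of $\Gamma$ inducing a complete subgraph) lies inside a single equivalence class. To see this, take two vertices $u,v$ in the same $K_a$. They are adjacent since $K_a$ is complete, and by the all-or-nothing property every vertex outside $K_a$ is adjacent to $u$ exactly when it is adjacent to $v$. Combined with the fact that $u$ and $v$ are each adjacent to all of $K_a$ besides themselves, this yields $N^1(u)\setminus\{v\}=N^1(v)\setminus\{u\}$, so $u$ and $v$ are structurally equivalent by Theorem \ref{neighborhoods}. Hence every block sits inside one equivalence class.

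From this the two cases of the theorem fall out. An independent equivalence class contains no two adjacent vertices, so no block can hold more than one of its vertices; thus such a class always appears in $R\Gamma$, and in particular in $\Omega(\Gamma)$, as a collection of separate $K_1$ vertices, which is exactly the stated exception. For a clique equivalence class $C$ I would argue that $\Omega(\Gamma)$ must merge all of $C$ into one vertex: if $C$ were split across two blocks $B_1,B_2$ of $\Omega(\Gamma)$, then, since $C$ is complete and all its vertices share the same neighbors outside $C$, the blocks $B_1$ and $B_2$ would be connected and satisfy $N^1(B_1)\setminus\{B_2\}=N^1(B_2)\setminus\{B_1\}$ in $\Omega(\Gamma)$, so they could be conflated, contradicting that no conflation is possible in $\Omega(\Gamma)$. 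Therefore each clique class collapses to exactly one vertex.

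The main obstacle I anticipate is bookkeeping the difference between neighborhoods computed in $\Gamma$ and neighborhoods computed in the reconfigured graph: the conflation condition is phrased for the blocks of $R\Gamma$, whereas structural equivalence is a statement about vertices of $\Gamma$, so one must check that ``sharing external neighbors'' transfers correctly between the two levels and that merging a clique class neither creates nor destroys adjacencies to other blocks. Once this translation is handled, it simultaneously settles well-definedness: every block lies in one equivalence class, clique classes can always be fully merged while independent classes can never be merged, so the only reconfiguration admitting no further conflation is the one with each clique class a single vertex and each independent class a family of $K_1$ vertices, which is therefore the smallest, namely $\Omega(\Gamma)$.
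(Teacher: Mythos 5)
Your proposal is correct and follows essentially the same route as the paper: show that the vertices grouped into any one block are structurally equivalent, then use the impossibility of further conflation in $\Omega(\Gamma)$ to force each clique class into a single block while independent classes necessarily remain split into $K_1$'s. Your write-up is in fact more careful than the paper's, which leaves the first of these steps as ``apparent'' where you derive it explicitly from the all-or-nothing adjacency between blocks together with Theorems \ref{neighborhoods} and \ref{big supersimilar theorem}.
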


\begin{proof}
First, we will consider the case where there are no $K_1$ vertices in $\Omega(\Gamma)$ that form a disconnected equivalence class. It is apparent that any of the vertices of $\Gamma$ represented by one of the vertices in $\Omega(\Gamma)$ are structurally equivalent, so we just need to show that the vertices of $\Omega(\Gamma)$ are the complete equivalence classes. This follows from the fact that $\Omega(\Gamma)$ is the smallest possible reconfiguration of $\Gamma$ under the reinterpretation of vertices and edges. If there were structurally equivalent vertices in two distinct vertices $K_a$ and $K_b$ of $\Omega(\Gamma)$, we could conflate the two vertices into a single vertex $K_{a+b}$, where the set of edges incident to $K_{a+b}$ would be the set of edges incident to $K_a$ or $K_b$ (which are the same because they are structurally equivalent). However, because $\Omega(\Gamma)$ is the smallest possible reconfiguration of $\Gamma$ under the reinterpretation of vertices and edges, no vertices can be conflated in this way, so we deduce that the vertices are the complete equivalence classes.
\\
\\
Now, we will consider the case with disconnected structurally equivalent vertices separately. Clearly, if we have two or more $K_1$ vertices in $\Omega(\Gamma)$ that share the same neighbors, these vertices are not their own complete equivalence classes because we can form a larger one containing them. The paragraph above applies to the rest of the graph, though-- it is easy to observe that any of the remaining vertices of $\Omega(\Gamma)$ are still their own complete equivalence classes.
\end{proof}

\begin{corollaryN}
\label{no supersimilarity in complete skeleton}
No two connected vertices in $\Omega(\Gamma)$ are structurally equivalent.
\end{corollaryN}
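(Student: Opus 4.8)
The plan is to argue by contradiction, leaning directly on the defining property of the complete skeleton established just before the statement. Recall from the discussion following the definition of conflation that $R\Gamma = \Omega(\Gamma)$ precisely when no two vertices of $R\Gamma$ can be conflated; in particular, $\Omega(\Gamma)$ is a reconfiguration in which \emph{no} pair of connected vertices $K_a, K_b$ satisfies $N^1(K_a)\setminus\{K_b\} = N^1(K_b)\setminus\{K_a\}$. This is the only fact about $\Omega(\Gamma)$ I expect to need.

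First I would suppose, for contradiction, that $\Omega(\Gamma)$ contains two connected (adjacent) vertices $K_a$ and $K_b$ that are structurally equivalent. Treating $\Omega(\Gamma)$ as an ordinary simple graph on its vertex set of complete induced subgraphs, I would apply Theorem \ref{neighborhoods} to the pair $K_a, K_b$. Since they are adjacent, structural equivalence yields $N^i(K_a)\setminus\{K_b\} = N^i(K_b)\setminus\{K_a\}$ for every $i \leq \diam{\Omega(\Gamma)}$, and as the paper already observes, for adjacent structurally equivalent vertices the content of this is carried entirely by the first neighborhood: $N^1(K_a)\setminus\{K_b\} = N^1(K_b)\setminus\{K_a\}$. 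The key observation is then that this is \emph{verbatim} the conflation criterion for $K_a$ and $K_b$. Hence the two vertices could be conflated into a single vertex $K_{a+b}$, contradicting the fact that $\Omega(\Gamma)$, being the smallest reconfiguration of $\Gamma$, admits no conflations. This forces the conclusion that no two connected vertices of $\Omega(\Gamma)$ are structurally equivalent.

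The main obstacle is not computational but conceptual: one must be careful that the notion of structural equivalence, originally defined on the vertices of $\Gamma$, is being invoked here for the vertices of $\Omega(\Gamma)$ viewed as a graph in its own right, and that Theorem \ref{neighborhoods} applies to this graph. The small verification that makes the proof work is that the conflation condition, defined for reconfigurations, coincides exactly with the adjacent-case neighborhood condition of Theorem \ref{neighborhoods}; once that identification is made explicit, the contradiction with the minimality of $\Omega(\Gamma)$ is immediate. One could alternatively phrase the whole argument as a direct consequence of Theorem \ref{complete skeleton theorem}, noting that distinct vertices of $\Omega(\Gamma)$ are distinct equivalence classes and hence cannot be mutually structurally equivalent; but the conflation-based contradiction is cleaner and keeps the focus on adjacency, which is exactly what the statement restricts to.
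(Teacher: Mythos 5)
Your proof is correct and follows essentially the same route as the paper's: assume two connected vertices of $\Omega(\Gamma)$ are structurally equivalent, observe that this gives exactly the conflation criterion, and derive a contradiction with the minimality of $\Omega(\Gamma)$. Your explicit appeal to Theorem \ref{neighborhoods} to identify structural equivalence with the first-neighborhood condition is a slightly more careful spelling-out of the step the paper leaves implicit, but the argument is the same.
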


\begin{proof}
This follows from reasoning similar to what was used to prove Theorem \ref{complete skeleton theorem}. If two vertices are connected and are structurally equivalent in $\Omega(\Gamma)$, then we can conflate them as we saw above, which contradicts the minimality of $V(\Omega(\Gamma))$.
\end{proof}

\bigskip
\noindent
\textbf{Definition.} We will let the \textbf{structure of a complete skeleton} of a graph $\Gamma$, which we will write as $S\Omega(\Gamma)$ or just $S\Omega$ when we are not concerned about the particular $\Gamma$, be the reinterpretation of $\Omega(\Gamma)$ where the vertices and edges are interpreted again in the standard way. 

\bigskip
To clarify, the structure of $\Omega(\Gamma)$ in Figure \ref{first complete skeleton ex.} is $C_4$. Note that $S\Omega(\Gamma)$ is what is typically called the compression graph of $\Gamma$. The vertices and edges of $S\Omega(\Gamma)$ are often called super-nodes and super-edges since they denote collections of nodes or edges in $\Gamma$. The primary value of complete skeletons, though, is that they uniquely represent graphs while maintaining the structures of their compressions.  

\begin{theoremN}
\label{complete skeleton suff nec}
For a graph $G$, $G = S\Omega$ if and only if $G$ has no connected structurally equivalent vertices.
\end{theoremN}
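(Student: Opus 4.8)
The plan is to read $G = S\Omega$ as $G = S\Omega(G)$, that is, $G$ coincides with the structure of its own complete skeleton, and to prove both implications using Corollary \ref{no supersimilarity in complete skeleton} together with the characterization recorded just before the statement: a reconfiguration $R\Gamma$ equals $\Omega(\Gamma)$ exactly when no vertices of $R\Gamma$ can be conflated. The key preliminary observation I would record is that $\Omega(\Gamma)$ and $S\Omega(\Gamma)$ have the same underlying abstract graph: passing from $\Omega(\Gamma)$ to $S\Omega(\Gamma)$ only reinterprets the super-nodes $K_m$ and super-edges in the standard way, and does not alter the vertex--edge incidence data. Since structural equivalence is defined purely through $\mathrm{Aut}$ of a graph, $\Omega(\Gamma)$ and $S\Omega(\Gamma)$ share the same automorphism group and hence the same structural equivalence classes. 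In particular, Corollary \ref{no supersimilarity in complete skeleton} transfers verbatim to $S\Omega(\Gamma)$.

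For the forward direction, I would assume $G = S\Omega(G)$. By Corollary \ref{no supersimilarity in complete skeleton}, no two connected vertices of $\Omega(G)$ are structurally equivalent, and by the observation above the same holds in $S\Omega(G)$. Since $G = S\Omega(G)$, this immediately yields that $G$ has no connected structurally equivalent vertices, which is the desired conclusion.

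For the converse, I would assume $G$ has no connected structurally equivalent vertices and show that the trivial reconfiguration of $G$ (every vertex a copy of $K_1$, every edge a single edge) is already $\Omega(G)$. By the definition of conflation, two $K_1$ vertices $\{u\}$ and $\{v\}$ can be conflated precisely when $u$ and $v$ are connected and $N^1(u)\setminus\{v\} = N^1(v)\setminus\{u\}$; by Theorem \ref{neighborhoods} this is exactly the condition that $u$ and $v$ be connected and structurally equivalent. By hypothesis no such pair exists, so no vertices of the trivial reconfiguration can be conflated, and hence the trivial reconfiguration equals $\Omega(G)$ by the conflation characterization. Reinterpreting these $K_1$ vertices in the standard way returns $G$, so $S\Omega(G) = G$. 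Note that disconnected structurally equivalent vertices are irrelevant here, since conflation requires connectedness --- this is precisely why the theorem excludes only the connected equivalences.

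The step I would be most careful about, and the main obstacle, is the converse claim that $\Omega(G)$ coincides with the trivial reconfiguration, i.e., that nothing other than a connected structurally equivalent pair can trigger a conflation. This rests on matching the conflation criterion against Theorem \ref{neighborhoods} and invoking the minimality of $\Omega(G)$ encoded in the conflation characterization. Everything else is bookkeeping, provided the identification of the abstract structures of $\Omega(G)$ and $S\Omega(G)$ is made explicit at the outset so that Corollary \ref{no supersimilarity in complete skeleton} may be applied directly to $S\Omega(G)$.
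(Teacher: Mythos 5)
Your proposal is correct and follows essentially the same route as the paper: the forward direction is read off from Corollary \ref{no supersimilarity in complete skeleton}, and the converse shows that a reconfiguration of $G$ admits no conflations because conflation requires a connected pair with matching neighborhoods, i.e.\ a connected structurally equivalent pair. Your only deviations are cosmetic --- you instantiate the reconfiguration as the trivial all-$K_1$ one on $G$ itself (the paper leaves the underlying graph $\Gamma$ generic) and you make explicit the identification of the abstract structures of $\Omega$ and $S\Omega$, which the paper leaves implicit.
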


\begin{proof}
The forward direction was established by Corollary \ref{no supersimilarity in complete skeleton}. Now, we just need to show that any graph without connected structurally equivalent vertices can be the structure of a complete skeleton. Let $G$ be such a graph. Reinterpret the vertices and edges of $G$ such that the vertices represent complete induced subgraphs of some other graph $\Gamma$ and two of these vertices are connected if and only if the two complete induced subgraphs are completely connected. Because $G$ contains no connected structurally equivalent vertices, there are no connected vertices $K_a$ and $K_b$ in $R\Gamma$ such that $N^1(K_a)\setminus\{K_b\}\neq N^1(K_b)\setminus\{K_a\}$. Thus, no vertices in $R\Gamma$ can be conflated, so $R\Gamma = \Omega(\Gamma)$, which means $G=S\Omega$.
\end{proof}

\bigskip
These new results allow us to come up with a formula for rank$(I+A(\Gamma))$; next, we will find an upper bound for this.

\begin{theoremN}
\label{rank equation 2}
We can calculate rank$(I+A(\Gamma))$ by the equation:
\begin{equation}
\label{rank inequality}
    \textrm{rank}(I+A(\Gamma)) \leq |V(S\Omega(\Gamma))|.
\end{equation}
\end{theoremN}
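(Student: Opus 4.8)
The plan is to bound $\textrm{rank}(I+A(\Gamma))$ by the number of \emph{distinct rows} of the matrix $I+A(\Gamma)$, and then to show that this number never exceeds $|V(S\Omega(\Gamma))|$. Since the dimension of the row space of any matrix is at most the number of distinct row vectors (equal rows contribute the same vector to the row space), the inequality $\textrm{rank}(I+A(\Gamma)) \leq \#\{\textrm{distinct rows of } I+A(\Gamma)\}$ is immediate, and all the content lies in the counting.

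The key lemma I would establish first is that two \emph{connected} (adjacent) structurally equivalent vertices $u$ and $v$ yield identical rows of $I+A(\Gamma)$. Indexing the entries of row $u$ by the vertices $x$, the diagonal entry at $x=u$ is $1$, the entry at $x=v$ is $1$ since $u$ and $v$ are adjacent, and for $x \notin \{u,v\}$ the entry is the indicator of $x \in N^1(u)$. The analogous description holds for row $v$ with the roles of $u$ and $v$ exchanged. By Theorem~\ref{neighborhoods}, $N^1(u)\setminus\{v\} = N^1(v)\setminus\{u\}$, so the two rows agree at every position $x \notin \{u,v\}$; and at both positions $u$ and $v$ each row carries a $1$ (in one row contributed by the diagonal, in the other by the edge $uv$). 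Hence the two rows coincide.

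With this lemma in hand, I would group the rows of $I+A(\Gamma)$ by equivalence class. By Theorem~\ref{big supersimilar theorem}, any equivalence class containing an edge induces a complete subgraph, so its vertices are pairwise adjacent and structurally equivalent; by the lemma all of their rows are equal, contributing a single distinct row. A completely disconnected equivalence class of size $m$ may contribute up to $m$ distinct rows --- its vertices are pairwise non-adjacent, so the differing diagonal entries prevent the rows from coinciding --- but this only helps for an upper bound. Consequently the number of distinct rows of $I+A(\Gamma)$ is at most the number of complete (edge-containing or singleton) equivalence classes plus the total number of vertices lying in completely disconnected equivalence classes.

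Finally I would identify this count with $|V(S\Omega(\Gamma))|$ by appealing to Theorem~\ref{complete skeleton theorem}: the vertices of $\Omega(\Gamma)$ are exactly the complete equivalence classes (one vertex each) together with the individual vertices of each completely disconnected equivalence class, which cannot be conflated since conflation requires adjacency. Thus $|V(S\Omega(\Gamma))|$ equals precisely the upper bound on distinct rows obtained above, and combining the two steps gives $\textrm{rank}(I+A(\Gamma)) \leq |V(S\Omega(\Gamma))|$. I expect the main obstacle to be the bookkeeping around the disconnected classes: one must observe that although such a class yields several distinct rows rather than one, it is simultaneously represented by that same number of un-conflated vertices in the skeleton, so the two counts remain aligned; one also has to treat the diagonal-versus-adjacency subtlety in the identical-rows lemma with care.
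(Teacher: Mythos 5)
Your proposal is correct and follows essentially the same route as the paper: both arguments rest on the observation that adjacent structurally equivalent vertices produce identical rows in $I+A(\Gamma)$ (the paper's proof of Theorem~\ref{rank equation 2} phrases this as linear dependence of those rows in the adjacency matrix of the looped graph $\hat{\Gamma}$), and then count one row per edge-containing equivalence class plus one per remaining vertex to reach $|V(S\Omega(\Gamma))|$. Your ``rank at most the number of distinct rows'' framing and your explicit treatment of the completely disconnected classes are in fact cleaner than the paper's aside about those rows being ``linearly independent,'' which is not needed for an upper bound.
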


\begin{proof}
Consider the matrix $I+A(\Gamma)$. This is precisely the adjacency matrix of $\Gamma$ where a loop is attached to each vertex. Let this new graph be $\hat{\Gamma}$. If two vertices $u$ and $v$ were connected and structurally equivalent in $\Gamma$, then we see $N^1(u)=N^1(v)$ in $\hat{\Gamma}$, so connected structurally equivalent vertices have identical columns in rows in $A(\hat{\Gamma})=A(I+A(\Gamma))$, so they must be linearly dependent. If $u$ and $v$ were disconnected structurally equivalent vertices in $\Gamma$, then $N^1(u) \cup \{v\}=N^1(v) \cup \{u\}$ in $\hat{\Gamma}$. This means that for disconnected structurally equivalent vertices, their columns and rows differ in two places (corresponding to the coordinates of the two vertices) in $A(\hat{\Gamma})$. Hence, the rows and columns for any disconnected structurally equivalent vertices must be linearly independent with respect to each other, so we cannot \textit{a priori} establish a lower upper bound for rank($I+A(\Gamma))$. Thus, we can conclude rank$(I+A(\Gamma))$ is bounded above by the sum of the number of connected equivalence classes in $\Gamma$ with the number of the individual disconnected structurally equivalent vertices. Equivalently, rank$(I+A(\Gamma)) \leq |V(S\Omega(\Gamma))|$.
\end{proof}

\bigskip
Using this result, we can also find a new expression for determining whether a graph has -1 as an eigenvalue and what its multiplicity is.


\begin{lemmaN}
\label{multiplicity inequality}
If the multiplicity of the -1 eigenvalue for a simple graph $\Gamma$ is $k$, then $|V(\Gamma)|-|V(S\Omega(\Gamma))|\geq k$.
\end{lemmaN}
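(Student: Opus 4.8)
The plan is to convert the statement into a single rank inequality and then try to control that rank by the skeleton. Since $A(\Gamma)$ is real symmetric, the algebraic and geometric multiplicities of $-1$ coincide, so $k = \dim\ker\big(A(\Gamma)+I\big) = \dim\ker\big(I+A(\Gamma)\big)$. Applying rank--nullity to the $|V(\Gamma)|\times|V(\Gamma)|$ matrix $I+A(\Gamma)$ gives the exact identity
\[
k = |V(\Gamma)| - \operatorname{rank}\big(I+A(\Gamma)\big).
\]
The first step is simply to record this reformulation: substituting it into the desired bound $|V(\Gamma)|-|V(S\Omega(\Gamma))|\geq k$ and cancelling $|V(\Gamma)|$, the lemma is seen to be equivalent to the rank bound $\operatorname{rank}\big(I+A(\Gamma)\big)\geq |V(S\Omega(\Gamma))|$. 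So everything reduces to producing a large enough rank.

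The natural second step is to exhibit a nonsingular principal submatrix of size $|V(S\Omega(\Gamma))|$. I would choose one representative vertex from each super-node of $\Omega(\Gamma)$ (one per connected equivalence class, and each disconnected structurally equivalent vertex taken individually, in accordance with Theorem \ref{complete skeleton theorem}). Because distinct super-nodes are either completely joined or completely non-adjacent, the principal submatrix of $I+A(\Gamma)$ indexed by these representatives is exactly $I+A\big(S\Omega(\Gamma)\big)$. Since the rank of a submatrix never exceeds the rank of the whole, this yields $\operatorname{rank}\big(I+A(\Gamma)\big)\geq \operatorname{rank}\big(I+A(S\Omega(\Gamma))\big)$, and the target inequality would follow once one shows $\operatorname{rank}\big(I+A(S\Omega(\Gamma))\big)=|V(S\Omega(\Gamma))|$.

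The main obstacle sits precisely at this last equality, and I expect it to be the genuine sticking point. The condition $\operatorname{rank}\big(I+A(S\Omega(\Gamma))\big)=|V(S\Omega(\Gamma))|$ says exactly that $I+A\big(S\Omega(\Gamma)\big)$ is nonsingular, i.e. that $-1$ is \emph{not} an eigenvalue of the compression graph $S\Omega(\Gamma)$ itself. Nothing in the construction forces this: although the chosen representatives give pairwise distinct rows, distinctness does not imply linear independence, and the residual rank drop is measured by the multiplicity of $-1$ inside $S\Omega(\Gamma)$. Moreover the tool available in the opposite direction, Theorem \ref{rank equation 2}, only supplies $\operatorname{rank}\big(I+A(\Gamma)\big)\leq |V(S\Omega(\Gamma))|$, which by the rank--nullity identity gives $k\geq |V(\Gamma)|-|V(S\Omega(\Gamma))|$ rather than the reverse. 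Consequently I anticipate that the bound as worded can be pushed through only under the extra hypothesis that $S\Omega(\Gamma)$ has no $-1$ eigenvalue; when $\Gamma$ has no structurally equivalent vertices (so $S\Omega(\Gamma)=\Gamma$ is an arbitrary graph, such as $C_6$), the required lower bound on the rank simply fails, and this is the obstruction any proof of the statement in its present form must confront.
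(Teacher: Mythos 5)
Your analysis is essentially correct, and what it exposes is a defect in the statement rather than a gap in your own reasoning. The rank--nullity reduction is exactly right: the printed inequality $|V(\Gamma)|-|V(S\Omega(\Gamma))|\geq k$ is equivalent to $\operatorname{rank}(I+A(\Gamma))\geq |V(S\Omega(\Gamma))|$, and your principal-submatrix argument correctly shows that this would require $I+A(S\Omega(\Gamma))$ to be nonsingular, which fails in general. Your counterexample is genuine: for $\Gamma=C_6$ no two vertices are structurally equivalent, so $S\Omega(\Gamma)=C_6$ and the left-hand side of the lemma is $0$, while $-1$ is an eigenvalue of $C_6$ with multiplicity $2$.

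The paper's own proof is a one-line appeal to the identity $k=|V(\Gamma)|-\operatorname{rank}(I+A(\Gamma))$ combined with Theorem \ref{rank equation 2}; but Theorem \ref{rank equation 2} gives $\operatorname{rank}(I+A(\Gamma))\leq |V(S\Omega(\Gamma))|$, and substituting that into the identity yields $k\geq |V(\Gamma)|-|V(S\Omega(\Gamma))|$ --- the reverse of what the lemma asserts. That reversed inequality is also the one consistent with the rest of the section, since Equation \ref{multiplicity equation} writes $\operatorname{rank}(I+A(\Gamma))=|V(S\Omega(\Gamma))|-\Lambda$ with $\Lambda\geq 0$. So the inequality sign in the lemma appears simply to be written backwards, the paper's cited tools prove only the opposite bound, and the obstruction you isolate (that $-1$ may be an eigenvalue of $S\Omega(\Gamma)$ itself, measured by $\Lambda$) is precisely the right one. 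You did not miss an idea; no proof of the statement as printed can exist.
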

\begin{proof}
It is well known in spectral graph theory that, for a graph on $n$ vertices, the multiplicity of an eigenvalue $\lambda$ is given by $n-\textrm{rank}(I\lambda-A(\Gamma))$, so the theorem follows from this and Theorem \ref{rank equation 2}. 
\end{proof}

Because the inequality in Theorem \ref{rank equation 2} provides us with a lower bound, we can correct this with a small constant term. If we denote this term by $\Lambda \in \mathbb{Z}^{\geq 0}$, then we find
\begin{equation}
\label{multiplicity equation}
rank(I+A(\Gamma))= |V(S\Omega(\Gamma))|-\Lambda = \textrm{rank}(I+A(S\Omega(\Gamma))),
\end{equation}
so
\begin{equation}
\Lambda=|V(S\Omega(\Gamma))|-\textrm{rank}(I+A(S\Omega(\Gamma))).
\end{equation}

Because we now have an equality, we can provide the following characterization of graphs by the multiplicity of the -1 eigenvalue.

\begin{theoremN}
\label{multiplicity characterization}
A graph $\Gamma$ has -1 as an eigenvalue of multiplicity $k$ if and only if 
\begin{equation}
\label{actual char equation}
k=|V(\Gamma)|-(|V(\Omega(\Gamma))|-\Lambda) = |V(\Gamma)|-\textrm{rank}(I+A(S\Omega(\Gamma))).
\end{equation}
\end{theoremN}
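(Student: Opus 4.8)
The plan is to deduce this directly from the standard spectral multiplicity formula together with the rank identity already packaged in equation \eqref{multiplicity equation}. Recall (as used in Lemma \ref{multiplicity inequality}) that for a graph on $n$ vertices the multiplicity of an eigenvalue $\lambda$ equals $n - \textrm{rank}(\lambda I - A(\Gamma))$. Specializing to $\lambda = -1$, I would first observe that $\textrm{rank}(-I - A(\Gamma)) = \textrm{rank}(-(I + A(\Gamma))) = \textrm{rank}(I + A(\Gamma))$, since multiplying a matrix by the scalar $-1$ leaves its rank unchanged. Hence the multiplicity of the $-1$ eigenvalue is exactly $|V(\Gamma)| - \textrm{rank}(I + A(\Gamma))$, and it suffices to rewrite this quantity using the complete skeleton.

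Next, I would substitute the two equalities in equation \eqref{multiplicity equation}, namely $\textrm{rank}(I + A(\Gamma)) = |V(S\Omega(\Gamma))| - \Lambda = \textrm{rank}(I + A(S\Omega(\Gamma)))$. Because $S\Omega(\Gamma)$ is by definition merely the reinterpretation of $\Omega(\Gamma)$ with its vertices and edges read in the standard way, the two graphs share a vertex set, so $|V(\Omega(\Gamma))| = |V(S\Omega(\Gamma))|$. Replacing $\textrm{rank}(I + A(\Gamma))$ by $|V(S\Omega(\Gamma))| - \Lambda$ in the multiplicity expression yields $k = |V(\Gamma)| - (|V(\Omega(\Gamma))| - \Lambda)$, while replacing it instead by $\textrm{rank}(I + A(S\Omega(\Gamma)))$ yields $k = |V(\Gamma)| - \textrm{rank}(I + A(S\Omega(\Gamma)))$. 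These are precisely the two expressions appearing in \eqref{actual char equation}.

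Since every manipulation above is an equality, the biconditional is immediate in both directions: if $k$ is the multiplicity of $-1$, then $k$ satisfies \eqref{actual char equation}; conversely, any $k$ satisfying \eqref{actual char equation} equals $|V(\Gamma)| - \textrm{rank}(I + A(\Gamma))$, which is exactly the multiplicity of $-1$. Thus the theorem is a restatement of the spectral formula in the language of complete skeletons.

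I expect the only real subtlety here to be bookkeeping, namely verifying that the two right-hand sides of \eqref{actual char equation} genuinely agree, which rests on both equalities in \eqref{multiplicity equation} holding at once. The substantive content — that collapsing each connected equivalence class to a single vertex in passing from $\Gamma$ to $S\Omega(\Gamma)$ preserves $\textrm{rank}(I + A)$, with the nonnegative correction term $\Lambda$ absorbing the contribution of the disconnected equivalence classes flagged in the proof of Theorem \ref{rank equation 2} — lives entirely in Theorem \ref{rank equation 2} and \eqref{multiplicity equation}, which I am taking as given. Granting those, no new difficulty arises at this final step.
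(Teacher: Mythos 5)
Your proposal is correct and follows essentially the same route as the paper: apply the standard spectral identity that the multiplicity of $-1$ equals $|V(\Gamma)|-\textrm{rank}(I+A(\Gamma))$ (as in Lemma \ref{multiplicity inequality}), then substitute the two equalities of Equation \ref{multiplicity equation}. You simply spell out the bookkeeping (the rank invariance under scalar multiplication by $-1$ and the identification $|V(\Omega(\Gamma))|=|V(S\Omega(\Gamma))|$) that the paper leaves implicit.
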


\begin{proof}
This follows by the same reasoning used in the proof of Theorem \ref{multiplicity inequality} and by Equation \ref{multiplicity equation}
\end{proof}

Unfortunately, this does not give us a purely graph theoretic method for determining the multiplicity of the -1 eigenvalue. For this to be the case, we would need a way to interpret $\Lambda$ without determining $\textrm{rank}(I+A(S\Omega(\Gamma)))$ for each complete skeleton structure. At least for small $|V(S\Omega(\Gamma))|$, we find that $\Lambda$ is almost always 0. As stated in the next section, the only complete skeleton structure with five or fewer vertices such that $\Lambda \neq 0$ is $P_5$, for which $\Lambda=1$. For all the other cases where $\Lambda=0$, though, we can rewrite Equation \ref{actual char equation} as 
\[
\textrm{rank}(I+A(\Gamma))=|V(\Gamma)|-|V(S\Omega(\Gamma))|,
\]
which effectively reduces the linear algebraic problem to a graph theoretic problem. For the general case, however, further research must be completed before we can solve this problem of determining the multiplicity of the -1 eigenvalue in purely graph theoretic terms. 

\section{Complete Skeletons and the multiplicity of the -1 eigenvalue}

Previous literature has already considered the possible graph structures such that $1 \leq \textrm{rank}(I+A(\Gamma)) \leq 3$. We will present these known results, translate them into the language of complete skeletons, and then we will then use some of the results from the previous section to consider some possible graphs such that rank$(I+A(\Gamma))=4$ or 5. Or, in other words, for graphs on $n$ vertices, we will find graphs that have -1 as an eigenvalue with multiplicity $n-4$ or $n-5$.

\begin{theoremN}
\label{rank 1}
If rank$(I+A(\Gamma))=1$, then $\Gamma$ is complete.
\end{theoremN}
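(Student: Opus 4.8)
The plan is to argue directly from the structure of the matrix $M = I + A(\Gamma)$. Because $\Gamma$ is simple, $M$ is an $n \times n$ symmetric matrix whose diagonal entries are all $1$ and whose off-diagonal entries lie in $\{0,1\}$. The hypothesis $\textrm{rank}(M)=1$ means the row space of $M$ is one-dimensional; since every row of $M$ contains a $1$ on the diagonal, every row is nonzero, so any two rows must be nonzero scalar multiples of one another. This proportionality is the single fact I will exploit.

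First I would fix two distinct indices $i \neq j$ and write $\mathrm{row}_j = c\,\mathrm{row}_i$ for some scalar $c$. Comparing the entries of these two rows in column $i$ and in column $j$, and using the symmetry $M_{ij}=M_{ji}$ together with $M_{ii}=M_{jj}=1$, yields the pair of equations $M_{ij}=c$ and $1 = c\,M_{ij}$. Substituting the first into the second gives $c^2 = 1$, hence $M_{ij}^2 = 1$. Since $M_{ij}\in\{0,1\}$, this forces $M_{ij}=1$, i.e. $A_{ij}=1$. As this holds for every pair $i\neq j$, every off-diagonal entry of $A(\Gamma)$ equals $1$, so $\Gamma$ is complete. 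The case $n=1$ is immediate, as $K_1$ is complete and $I+A(K_1)=[1]$ has rank $1$.

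There is no serious obstacle here; the theorem is genuinely a short linear-algebra fact. The only point that requires care is the bookkeeping that pins down the scalar $c$: one must use both column comparisons to derive $c^2=1$ and thereby rule out $M_{ij}=0$, rather than concluding prematurely from a single entry. (Equivalently, one could note that a real symmetric rank-one matrix with positive diagonal has the form $vv^{T}$ with $v_i^2=1$, so $M_{ij}=v_iv_j\in\{-1,+1\}$, and the constraint $M_{ij}\in\{0,1\}$ forces all signs equal and hence all off-diagonal entries equal to $1$.)

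Finally, I would remark that this conclusion is consistent with the complete-skeleton machinery developed above: $\Gamma$ is complete exactly when $|V(S\Omega(\Gamma))|=1$. Note that Theorem \ref{rank equation 2} only supplies the inequality $\textrm{rank}(I+A(\Gamma)) \leq |V(S\Omega(\Gamma))|$, so by itself a rank of $1$ merely gives $|V(S\Omega(\Gamma))|\geq 1$, which is vacuous; it is the direct matrix computation above that delivers the converse inference and identifies $\Gamma$ as complete.
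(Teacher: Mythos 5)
Your proof is correct. Note, however, that the paper does not actually prove this statement: Theorems \ref{rank 1}, \ref{rank 2}, and \ref{rank 3} are quoted from the literature (the paper cites \cite{almost_complete_spectra}) and presented without proof, so there is no in-paper argument to compare against. Your direct computation is a sound, self-contained replacement: rank one forces every row of $M=I+A(\Gamma)$ to be a nonzero scalar multiple of any other (each row is nonzero because of the diagonal $1$), and comparing the $i$th and $j$th entries of rows $i$ and $j$ together with the symmetry $M_{ij}=M_{ji}$ gives $M_{ij}^2=1$, which combined with $M_{ij}\in\{0,1\}$ yields $M_{ij}=1$ for all $i\neq j$. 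Your closing remark is also well taken and worth keeping in mind when reading the paper: Theorem \ref{rank equation 2} only gives the upper bound $\textrm{rank}(I+A(\Gamma))\leq |V(S\Omega(\Gamma))|$, so the complete-skeleton machinery alone cannot recover this converse direction, and a direct matrix argument (or the cited external result) is genuinely needed here.
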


\begin{theoremN}
\label{rank 2}
If rank$(I+A(\Gamma))=2$, then $\Gamma$ is the union of two disjoint complete graphs.
\end{theoremN}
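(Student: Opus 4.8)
The plan is to argue directly from the fact that rank is monotone under taking submatrices: if $\operatorname{rank}(I+A(\Gamma))=2$, then every principal submatrix of $I+A(\Gamma)$ has rank at most $2$, and in particular every $3\times 3$ principal submatrix is singular. Fix three vertices $x,y,z$ and set $a=[x\sim y]$, $b=[x\sim z]$, $c=[y\sim z]$ (each $0$ or $1$). The corresponding principal submatrix is
\[
\begin{pmatrix} 1 & a & b \\ a & 1 & c \\ b & c & 1 \end{pmatrix},
\qquad
\det = 1-a^2-b^2-c^2+2abc = 1-a-b-c+2abc,
\]
using $a^2=a$ for $0/1$ entries. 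Requiring this determinant to vanish for every triple is the combinatorial engine of the whole argument.

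First I would run through the four cases. The triples $(a,b,c)$ giving a nonzero determinant are exactly $(0,0,0)$, which yields $\det=1$, and the permutations of $(1,1,0)$, which yield $\det=-1$. Translating back: $(0,0,0)$ is an independent set of size three (an induced $\overline{K_3}$), and $(1,1,0)$ is a vertex adjacent to two mutually non-adjacent vertices, i.e.\ an induced $P_3$. Hence rank $2$ forces $\Gamma$ to contain no induced $\overline{K_3}$ and no induced $P_3$; the remaining triples $(1,0,0)$ (an induced $K_2\sqcup K_1$) and $(1,1,1)$ (a triangle) are the only allowed three-vertex induced subgraphs.

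Next I would invoke the classical characterization that a graph is $P_3$-free if and only if each of its connected components is complete (a cluster graph): in any connected non-complete component one finds two non-adjacent vertices, and the middle three vertices of a shortest path between them form an induced $P_3$. Thus $\Gamma$ is a disjoint union of complete graphs. The absence of an independent set of size three then bounds the number of components by two, since one vertex chosen from each of three distinct cliques would be mutually non-adjacent. Finally, a single complete graph (or a single vertex) gives $I+A(\Gamma)=J$ of rank $1$, contradicting rank $2$, so exactly two nonempty cliques occur, i.e.\ $\Gamma=K_a\sqcup K_b$. The converse is immediate, since $I+A(K_a\sqcup K_b)$ is block-diagonal with two all-ones blocks, each of rank $1$.

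The step I expect to carry the real content is the minor computation together with the $P_3$-free reduction; everything else is bookkeeping. There is no serious obstacle here, but the point requiring care is the clean translation between the vanishing of the $3\times 3$ determinants and the forbidden induced subgraphs, and then citing (or briefly proving) the $P_3$-free cluster-graph characterization rather than leaving it implicit. I would also note in passing that this dovetails with the structural-equivalence viewpoint of Theorem~\ref{complete skeleton suff nec}: collapsing the cliques yields $S\Omega(\Gamma)=\overline{K_2}$, consistent with $|V(S\Omega(\Gamma))|=2$ and $\Lambda=0$ in Equation~\ref{multiplicity equation}.
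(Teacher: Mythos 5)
Your argument is correct, and it is worth noting up front that the paper itself offers no proof of this statement: Theorems \ref{rank 1}--\ref{rank 3} are imported from \cite{almost_complete_spectra} and stated without proof, so your write-up is a genuine addition rather than a variant of an argument in the text. Your route --- every $3\times 3$ principal submatrix of a rank-$2$ matrix is singular, the determinant $1-a-b-c+2abc$ vanishes exactly when $(a,b,c)$ is a permutation of $(1,0,0)$ or equals $(1,1,1)$, hence $\Gamma$ is $P_3$-free and $\overline{K_3}$-free, hence a disjoint union of at most two cliques, and exactly two since one clique gives rank $1$ --- is elementary, self-contained, and checks out in every case I verified, including the degenerate two-vertex case where the minor condition is vacuous but the ``one clique has rank 1'' step still forces two components. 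What your approach buys is a short verifiable proof from first principles; what the paper's citation buys is consistency with the existing literature on graphs with $\mathrm{rank}(I+A)\le 3$, where these characterizations are already established. Two small polish points: state the submatrix fact as ``the rank of any submatrix is at most the rank of the matrix'' rather than ``monotone under taking submatrices,'' and for the $P_3$-free reduction the cleanest phrasing is that a connected non-complete graph has two vertices at distance exactly $2$, whose common neighbor yields an induced $P_3$. Your closing observation that $S\Omega(\Gamma)=2K_1$ with $\Lambda=0$ correctly matches Theorem 5.2$'$ of the paper.
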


\begin{theoremN}
\label{rank 3}
If rank$(I+A(\Gamma))=3$ such that $\Gamma$ has $n$ vertices, then $\Gamma = K_n\setminus K_{l,m}$ such that $l,m\geq 1$ and $l+m \leq n-1$ or $\Gamma = K_a+K_b+K_c$ such that $a,b,c \geq 1$ and $a+b+c=n$.
\end{theoremN}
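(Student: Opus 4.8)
The plan is to push the whole question down to the level of the complete skeleton, where the graph has no connected structurally equivalent vertices, classify there, and then blow cliques back up. The key reduction is Equation~\ref{multiplicity equation}, which gives $\operatorname{rank}(I+A(\Gamma)) = \operatorname{rank}(I+A(S\Omega(\Gamma)))$: conflating a class of connected structurally equivalent vertices deletes duplicate rows together with the matching duplicate columns of $I+A(\Gamma)$ (by the proof of Theorem~\ref{rank equation 2} such vertices have identical rows), and deleting duplicate rows and columns cannot change the rank. So it suffices to classify graphs $G$ with no connected structurally equivalent vertices (equivalently $G=S\Omega$, by Theorem~\ref{complete skeleton suff nec}) satisfying $\operatorname{rank}(I+A(G))=3$, after which every rank-$3$ graph $\Gamma$ is recovered by replacing each vertex of such a $G$ by a clique $K_{a_i}$ with $a_i\ge 1$ and turning each skeleton edge into a complete join. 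Theorem~\ref{big supersimilar theorem} guarantees that a connected equivalence class really is a clique, so this blow-up is exactly the inverse of conflation and preserves the rank.

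For a \emph{disconnected} skeleton $G=G_1\sqcup\cdots\sqcup G_t$ the matrix $I+A(G)$ is block diagonal, so $\operatorname{rank}(I+A(G))=\sum_i \operatorname{rank}(I+A(G_i))$, and each summand is at least $1$ because $I+A(G_i)$ has an all-ones diagonal. Hence $t\le 3$. A connected component of rank $1$ must be $K_1$: rank $1$ forces completeness by Theorem~\ref{rank 1}, and $K_m$ with $m\ge 2$ has connected structurally equivalent vertices. A connected component of rank $2$ is impossible, since by Theorem~\ref{rank 2} every rank-$2$ graph is a disjoint union of two complete graphs and so is disconnected. Therefore the only disconnected possibility is $t=3$ with all components $K_1$, i.e.\ $G=\overline{K_3}$; blowing up gives $\Gamma=K_a+K_b+K_c$ with $a,b,c\ge 1$ and $a+b+c=n$, the second family.

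It remains to treat a \emph{connected} skeleton $G$ with $\operatorname{rank}(I+A(G))=3$, and here I would first bound $|V(G)|$. Since $I+A(G)$ is a real symmetric matrix of rank $3$, it has a nonsingular $3\times 3$ principal submatrix, say on vertices $\{x,y,z\}$; the three corresponding rows then form a basis of the row space, and their restrictions to the columns $x,y,z$ are already linearly independent. Consequently the triple $\big((I+A)_{ux},(I+A)_{uy},(I+A)_{uz}\big)\in\{0,1\}^3$ determines the entire row of any vertex $u$, and since $G$ has no closed twins all rows are distinct, forcing $|V(G)|\le 2^3=8$. A finite inspection of the connected graphs with no connected structurally equivalent vertices on $3\le |V(G)|\le 8$ vertices then leaves only $P_3$; for instance $P_4$, $C_4$ and $K_{1,3}$ are valid skeletons but each has rank $4$. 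Blowing up $P_3=A-B-C$ by cliques $K_a,K_b,K_c$ keeps the middle class joined to both ends while the ends stay mutually non-adjacent, so $\Gamma=K_n\setminus K_{l,m}$ with $l=a\ge 1$, $m=c\ge 1$ and middle size $b=n-l-m\ge 1$, i.e.\ $l+m\le n-1$, which recovers exactly the first family.

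The main obstacle is precisely this connected case. The bound $|V(G)|\le 8$ turns it into a finite problem, but certifying that no connected skeleton on $4,\dots,8$ vertices has $\operatorname{rank}(I+A)=3$ is genuine case analysis rather than the structural bookkeeping used elsewhere. I would either run this finite check directly, or make it structural by taking a $-1$-eigenvector $x$ of $A(G)$ (which exists once $|V(G)|>3$, since then $I+A(G)$ is singular) and exploiting the relations $\sum_{w\in N[v]}x_w=0$ for every vertex $v$, where $N[v]=N^1(v)\cup\{v\}$, to show that connectivity together with the absence of closed twins forces $|V(G)|=3$.
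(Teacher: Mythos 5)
Your route is genuinely different from the paper's, which gives no proof of Theorem~\ref{rank 3} at all and simply cites it from the literature; you instead derive it from the skeleton machinery of Sections~4--5. The reduction is correct, and your justification of Equation~\ref{multiplicity equation} is actually tighter than the paper's: conflating a connected class of structurally equivalent vertices removes rows of $I+A(\Gamma)$ that are literally identical, together with the matching duplicate columns, so $\mathrm{rank}(I+A(\Gamma))=\mathrm{rank}(I+A(S\Omega(\Gamma)))$, and by Theorem~\ref{big supersimilar theorem} the blow-up by cliques and complete joins exactly inverts the conflation. The disconnected case is complete as written: block-diagonality makes the rank additive over components, a connected component cannot have rank $2$ by Theorem~\ref{rank 2}, a rank-$1$ component of a twin-free graph must be $K_1$ by Theorem~\ref{rank 1}, and the only surviving skeleton is $3K_1$, which blows up to $K_a+K_b+K_c$ with $a+b+c=n$.

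The gap is the connected case. Your bound $|V(G)|\le 8$ is sound (each row of $I+A(G)$ is determined by its restriction to the columns of an invertible $3\times 3$ principal submatrix, and the rows are pairwise distinct because $G$ has no connected structurally equivalent vertices), but the claim that ``a finite inspection then leaves only $P_3$'' \emph{is} the content of the theorem in this case, and you have not performed the inspection: there are several thousand connected graphs on at most $8$ vertices, and one must certify that no twin-free one on $4$ through $8$ vertices has $\mathrm{rank}(I+A)=3$. Checking $P_4$, $C_4$, and $K_{1,3}$ does not discharge this, and your alternative finish via a $(-1)$-eigenvector $x$ with $\sum_{w\in N[v]}x_w=0$ is only a sketch --- it is not shown how connectivity and twin-freeness force $|V(G)|=3$ from these relations. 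Until one of these two arguments is actually executed (or the connected classification is imported from the reference the paper cites for Theorems~\ref{rank 1}--\ref{rank 3}), the first family $K_n\setminus K_{l,m}$ with $l,m\ge 1$ and $l+m\le n-1$ is asserted rather than proved.
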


These three theorems can be found in \cite{almost_complete_spectra}. The following three results are the equivalent expressions of the theorems above, using the terminology of complete skeletons. They will be presented without proof, for it should be easy to see how they follow from Theorems \ref{rank 1}, \ref{rank 2}, and \ref{rank 3}.

\bigskip
\noindent
\textbf{Theorem 5.1'.} \textit{If rank$(I+A(\Gamma))=1$, then $\Omega(\Gamma)$ is a single vertex.}

\bigskip
\noindent
\textbf{Theorem 5.2'.} \textit{If rank$(I+A(\Gamma))=2$, then $\Omega(\Gamma)$ is two disconnected vertices.}

\bigskip
\noindent
\textbf{Theorem 5.3'.} \textit{If rank$(I+A(\Gamma))=3$, then $\Omega(\Gamma)$ is three disconnected vertices or a 3-path.}

\bigskip
Next, we will consider graphs $\Gamma$ such that rank$(I+A(\Gamma))=4$. Because our characterization in Theorem \ref{multiplicity characterization} is not purely graph theoretic, we are unable to make the same kind of statement for these higher rank cases. In particular, our conditionals must consist of graph theoretic properties in the antecedent with the linear algebraic term as the consequent. Also, for the remainder of the paper, our inquiry will be guided by the number of vertices in complete skeletons.

\begin{theoremN}
\label{rank 4 theorem}
If $\Omega(\Gamma)$ is one of the graphs in Figure \ref{rank 4 fig}, then rank$(I+A(\Gamma))=4$.
\end{theoremN}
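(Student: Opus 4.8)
The plan is to transfer the rank computation from $\Gamma$ to its compression graph and then verify the resulting small matrices one at a time. The key observation is that Equation \ref{multiplicity equation} yields
\[
\textrm{rank}(I+A(\Gamma)) = \textrm{rank}(I+A(S\Omega(\Gamma))),
\]
so the size of each complete induced subgraph recorded on a vertex of $\Omega(\Gamma)$ is irrelevant: the rank depends only on the underlying structure $S\Omega(\Gamma)$. Reading each graph $G$ in Figure \ref{rank 4 fig} as a complete skeleton, so that $S\Omega(\Gamma)$ is $G$ with its vertices and edges reinterpreted in the ordinary way, it therefore suffices to show that $I+A(G)$ has rank exactly $4$ for every such $G$.

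First I would record, using Theorem \ref{complete skeleton suff nec}, that the graphs admissible as $S\Omega(\Gamma)$ are precisely those with no pair of adjacent structurally equivalent vertices; this is what determines the list in the figure, since graphs such as $K_4$, $K_4-e$, and the paw are excluded because an adjacent structurally equivalent pair would conflate. I would then dispatch the admissible four-vertex structures---the edgeless graph $\overline{K_4}$, the union $P_3+K_1$, the path $P_4$, the claw $K_{1,3}$, and the cycle $C_4$---by writing down $I+A(G)$ and checking that it is nonsingular (equivalently, that $\Lambda=0$), so that its rank equals its order $4$. These are all short determinant checks; for instance the tridiagonal recurrence $d_n = d_{n-1}-d_{n-2}$ for paths gives $d_4 = -1 \neq 0$, while $C_4$ has $I+A$-eigenvalues $3,1,1,-1$ with nonzero product.

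The case that must be treated separately, and which I expect to be the main obstacle, is $P_5$: it is the unique admissible complete skeleton on five vertices with $\Lambda \neq 0$. Here $|V(S\Omega(\Gamma))| = 5$, yet the same path recurrence gives $d_5 = d_4 - d_3 = -1 - (-1) = 0$, so $I+A(P_5)$ is singular, its rank is $4$, and $\Lambda = 1$. This is exactly the situation in which the bound of Theorem \ref{rank equation 2} fails to be tight, so it cannot be handled by the $\Lambda=0$ shortcut and instead requires the explicit verification that the corank is $1$. Combining the five four-vertex structures with $P_5$ exhausts the graphs of Figure \ref{rank 4 fig} and gives $\textrm{rank}(I+A(\Gamma)) = 4$ in every case.
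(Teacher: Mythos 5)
Your proposal is correct and follows essentially the same route as the paper: reduce to $S\Omega(\Gamma)$ via Theorem \ref{rank equation 2} and Equation \ref{multiplicity equation}, then verify $\Lambda=0$ (equivalently, nonsingularity of $I+A(G)$) for each of the five four-vertex structures $4K_1$, $K_1+P_3$, $P_4$, $K_{1,3}$, and $C_4\cong K_{2,2}$, with your explicit determinant and eigenvalue checks supplying detail the paper leaves to the reader. One correction of scope: $P_5$ does not appear in Figure \ref{rank 4 fig}, which contains only the four-vertex complete skeletons, so that portion of your argument is extraneous here --- the $P_5$ exception belongs to the subsequent five-vertex theorem, where your computation that $\textrm{rank}(I+A(P_5))=4$ and $\Lambda=1$ is exactly what is needed.
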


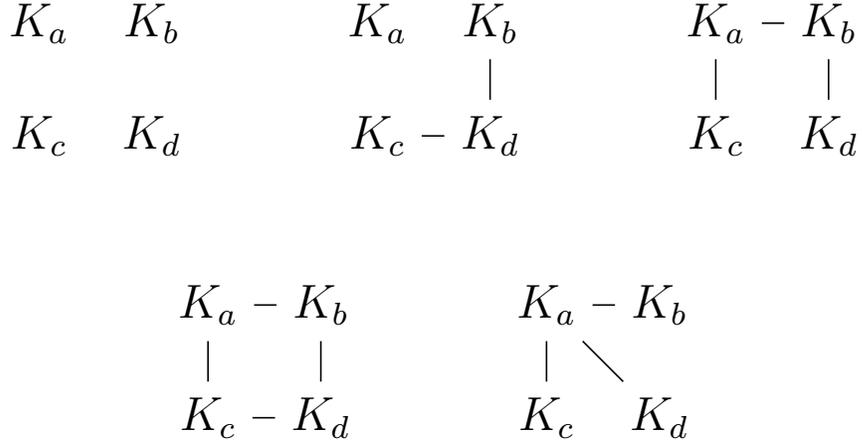
\begin{figure}
\scalebox{1.5}{
    \centering
    \begin{tikzpicture}
    \node (n1) at (2.5,1) {$K_c$};
    \node (n2) at (2.5,2) {$K_a$};
    \node (n3) at (3.5,1) {$K_d$};
    \node (n4) at (3.5,2) {$K_b$};
    \node (n5) at (5.5,1) {$K_c$};
    \node (n6) at (5.5,2) {$K_a$};
    \node (n7) at (6.5,1) {$K_d$};
    \node (n8) at (6.5,2) {$K_b$};
    \node (n13) at (1,3.5) {$K_c$};
    \node (n14) at (1,4.5) {$K_a$};
    \node (n15) at (2,3.5) {$K_d$};
    \node (n16) at (2,4.5) {$K_b$};
    \node (n17) at (4,3.5) {$K_c$};
    \node (n18) at (4,4.5) {$K_a$};
    \node (n19) at (5,3.5) {$K_d$};
    \node (n20) at (5,4.5) {$K_b$};
    \node (n21) at (7,3.5) {$K_c$};
    \node (n22) at (7,4.5) {$K_a$};
    \node (n23) at (8,3.5) {$K_d$};
    \node (n24) at (8,4.5) {$K_b$};
    
    \foreach \from/\to in {n17/n19,n19/n20,n21/n22,n22/n24,n24/n23,n1/n2,n2/n4,n3/n4,n1/n3,n6/n5,n6/n7,n6/n8}
    \draw (\from) -- (\to);
    \end{tikzpicture}}
    \caption{All the possible complete skeletons with exactly four vertices. The structures of these complete skeletons are $4K_1, K_1+P_3, P_4, C_4, K_{1,3},$ and $K_{2,2}$.}
    \label{rank 4 fig}
\end{figure}

\begin{proof}
The graphs in Figure \ref{rank 4 fig} are all the possible complete skeletons of four vertices. This can be verified by checking all graphs of four vertices and finding the ones that have no connected structurally equivalent vertices. By Theorem \ref{complete skeleton suff nec}, these graphs can be complete skeleton structures of graphs. Additionally, by Theorem \ref{rank equation 2}, we know that for any graph $\Gamma$ that has one of these five graphs as its complete skeleton, rank$(I+A(\Gamma)) \leq 4$. It can easily be shown that $\Lambda=|V(S\Omega(\Gamma))|-\textrm{rank}(I+A(S\Omega(\Gamma)))=0$ for all of these graphs, so rank$(I+A(\Gamma))=|V(S\Omega(\Gamma))|=4$.  
\end{proof}

Of course, it follows from this result that, if a graph $\Gamma$ of $n$ vertices has one of the graphs in Figure \ref{rank 4 fig} as its complete skeleton, then -1 is an eigenvalue of $\Gamma$ with multiplicity $n-4$. Figures \ref{r4 disconnected} and \ref{r4 connected} provide several examples of graphs that fall into this category.
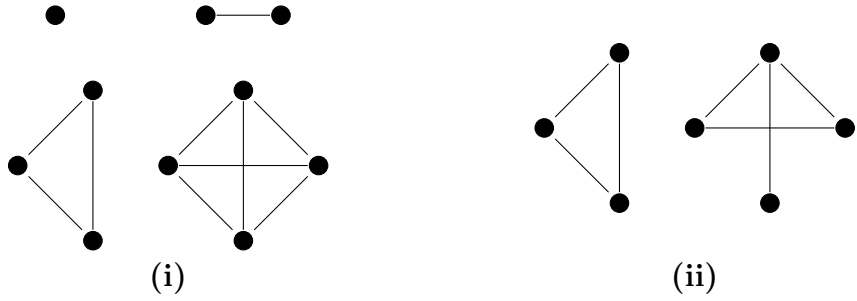
\begin{figure}
    \centering
    \begin{tikzpicture}
    \node (n1) at (1.5,4) {};
    \node (n2) at (3.5,4) {};
    \node (n3) at (4.5,4) {};
    \node (n4) at (2,3) {};
    \node (n5) at (1,2) {};
    \node (n6) at (2,1) {};
    \node (n7) at (3,2) {};
    \node (n8) at (4,3) {};
    \node (n9) at (4,1) {};
    \node (n10) at (5,2) {};
    \node (n11) at (8,2.5) {};
    \node (n12) at (9,1.5) {};
    \node (n13) at (9,3.5) {};
    \node (n14) at (10,2.5) {};
    \node (n15) at (11,1.5) {};
    \node (n16) at (11,3.5) {};
    \node (n17) at (12, 2.5) {};

     \draw [fill=black] (1.5,4) circle (3.5pt);
     \draw [fill=black] (3.5,4) circle (3.5pt);
     \draw [fill=black] (4.5,4) circle (3.5pt);
     \draw [fill=black] (2,3) circle (3.5pt);
     \draw [fill=black] (1,2) circle (3.5pt);
      \draw [fill=black] (2,1) circle (3.5pt);
       \draw [fill=black] (3,2) circle (3.5pt);
        \draw [fill=black] (4,3) circle (3.5pt);
         \draw [fill=black] (4,1) circle (3.5pt);
          \draw [fill=black] (5,2) circle (3.5pt);
          \draw [fill=black] (8,2.5) circle (3.5pt);
         \draw [fill=black] (9,1.5) circle (3.5pt);
          \draw [fill=black] (9,3.5) circle (3.5pt);
          \draw [fill=black] (10,2.5) circle (3.5pt);
        \draw [fill=black] (11,3.5) circle (3.5pt);
        \draw [fill=black] (11,1.5) circle (3.5pt);
        \draw [fill=black] (12,2.5) circle (3.5pt);
        
\node (n18) at (3,0.5) {\textbf{(i)}};
\node (n19) at (10,0.5) {\textbf{(ii)}};

    \foreach \from/\to in {n2/n3,n4/n5,n4/n6,n5/n6,n7/n8,n7/n9,n7/n10,n8/n9,n8/n10,n9/n10,n11/n12,n11/n13,n12/n13,n14/n16,n14/n17,n15/n16,n16/n17}
    \draw (\from) -- (\to);
    \end{tikzpicture}
    \caption{Two examples of disconnected graphs such that rank$(I+A(\Gamma))=4$. The graph in (i) is $K_1+K_2+K_3+K_4$ and has -1 as an eigenvalue of multiplicity 6. The graph in (ii) is $K_3 + K_4 \setminus K_{2,1}$ (or, identically, $K_3 + K_4 \setminus P_2$) and has -1 as an eigenvalue with multiplicity 3. Also, note $S\Omega(K_1+K_2+K_3+K_4) = 4K_1$ and $S\Omega(K_3 + K_4 \setminus K_{2,1}) = K_1+P_3$.}
    \label{r4 disconnected}
\end{figure}

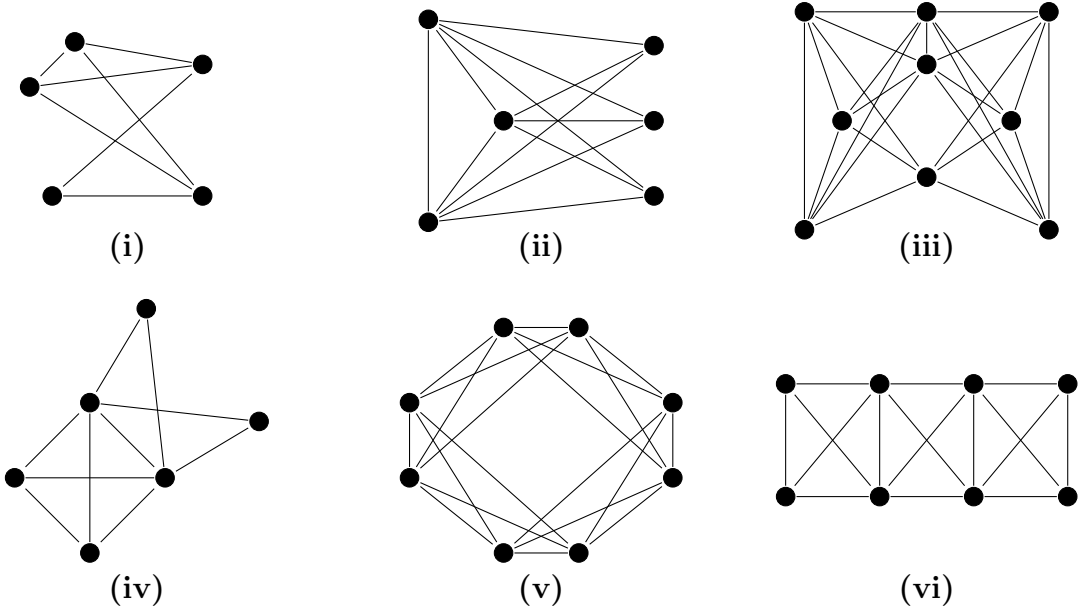
\begin{figure}
    \centering
    \begin{tikzpicture}
     \node (n1) at (0.5,2) {};
    \node (n2) at (1.5,1) {};
    \node (n3) at (1.5,3) {};
    \node (n4) at (2.5,2) {};
    \node (n5) at (2.25,4.25) {};
    \node (n6) at (3.75,2.75) {};
    \node (n7) at (5.75,2) {};
    \node (n8) at (5.75,3) {};
    \node (n9) at (7,1) {};
    \node (n10) at (8,1) {};
    \node (n11) at (9.25,2) {};
    \node (n12) at (9.25,3) {};
    \node (n13) at (7,4) {};
    \node (n14) at (8,4) {};
    \node (n15) at (10.75,1.75) {};
    \node (n16) at (10.75,3.25) {};
    \node (n17) at (12,1.75) {};
    \node (n18) at (12,3.25) {};
    \node (n19) at (13.25,1.75) {};
    \node (n20) at (13.25,3.25) {};
    \node (n21) at (14.5,1.75) {};
    \node (n22) at (14.5,3.25) {};
    \node (n23) at (1,5.75) {};
    \node (n24) at (0.7,7.2) {};
    \node (n47) at (1.3,7.8) {};
    \node (n25) at (3,5.75) {};
    \node (n26) at (3,7.5) {};
     \node (n27) at (6,5.4) {};
    \node (n28) at (6,8.1) {};
    \node (n29) at (7,6.75) {};
    \node (n30) at (9,5.75) {};
    \node (n31) at (9,6.75) {};
    \node (n32) at (9,7.75) {};
     \node (n33) at (11,8.2) {};
    \node (n34) at (11,5.3) {};
     \node (n35) at (11.5,6.75) {};
    \node (n36) at (12.625,8.2) {};
    \node (n48) at (12.625,7.5) {};
    \node (n37) at (12.625,6) {};
    \node (n38) at (13.75,6.75) {};
    \node (n39) at (14.25,8.2) {};
    \node (n40) at (14.25,5.3) {};
    
    \draw [fill=black] (0.5,2) circle (3.5pt);
     \draw [fill=black] (1.5,1) circle (3.5pt);
     \draw [fill=black] (1.5,3) circle (3.5pt);
     \draw [fill=black] (2.5,2) circle (3.5pt);
     \draw [fill=black] (2.25,4.25) circle (3.5pt);
      \draw [fill=black] (3.75,2.75) circle (3.5pt);
        \draw [fill=black] (5.75,2) circle (3.5pt);
     \draw [fill=black] (5.75,3) circle (3.5pt);
     \draw [fill=black] (7,1) circle (3.5pt);
     \draw [fill=black] (8,1) circle (3.5pt);
     \draw [fill=black] (9.25,2) circle (3.5pt);
      \draw [fill=black] (9.25,3) circle (3.5pt);
      \draw [fill=black] (7,4) circle (3.5pt);
      \draw [fill=black] (8,4) circle (3.5pt);
        \draw [fill=black] (10.75,1.75) circle (3.5pt);
     \draw [fill=black] (10.75,3.25) circle (3.5pt);
     \draw [fill=black] (12,1.75) circle (3.5pt);
     \draw [fill=black] (12,3.25) circle (3.5pt);
     \draw [fill=black] (13.25,1.75) circle (3.5pt);
      \draw [fill=black] (13.25,3.25) circle (3.5pt);
      \draw [fill=black] (14.5,1.75) circle (3.5pt);
      \draw [fill=black] (14.5,3.25) circle (3.5pt);
       \draw [fill=black] (1,5.75) circle (3.5pt);
      \draw [fill=black] (0.7,7.2) circle (3.5pt);
      \draw [fill=black] (1.3,7.8) circle (3.5pt);
      \draw [fill=black] (3,5.75) circle (3.5pt);
      \draw [fill=black] (3,7.5) circle (3.5pt);
      \draw [fill=black] (6,5.4) circle (3.5pt);
      \draw [fill=black] (6,8.1) circle (3.5pt);
       \draw [fill=black] (7,6.75) circle (3.5pt);
      \draw [fill=black] (9,5.75) circle (3.5pt);
      \draw [fill=black] (9,6.75) circle (3.5pt);
      \draw [fill=black] (9,7.75) circle (3.5pt);
       \draw [fill=black] (11,8.2) circle (3.5pt);
      \draw [fill=black] (11,5.3) circle (3.5pt);
      \draw [fill=black] (11.5,6.75) circle (3.5pt);
      \draw [fill=black] (12.625,8.2) circle (3.5pt);
      \draw [fill=black] (12.625,7.5) circle (3.5pt);
       \draw [fill=black] (12.625,6) circle (3.5pt);
      \draw [fill=black] (13.75,6.75) circle (3.5pt);
      \draw [fill=black] (14.25,8.2) circle (3.5pt);
      \draw [fill=black] (14.25,5.3) circle (3.5pt);
      
      \node (n41) at (2.125,0.5) {\textbf{(iv)}};
      \node (n42) at (7.5,0.5) {\textbf{(v)}};
      \node (n43) at (12.625,0.5) {\textbf{(vi)}};
      \node (n44) at (2,5.05) {\textbf{(i)}};
      \node (n45) at (7.5,5.05) {\textbf{(ii)}};
      \node (n46) at (12.625,5.05) {\textbf{(iii)}};
    
    \foreach \from/\to in {n1/n2,n1/n3,n1/n4,n2/n3,n2/n4,n3/n4,n5/n3,n5/n4,n6/n3,n6/n4,n7/n8,n9/n10,n11/n12,n13/n14,n7/n9,n7/n10,n8/n9,n8/n10,n9/n11,n9/n12,n10/n11,n10/n12,n11/n13,n11/n14,n12/n13,n12/n14,n13/n7,n13/n8,n14/n7,n14/n8,n15/n16,n17/n18,n19/n20,n21/n22,n15/n17,n15/n18,n16/n17,n16/n18,n17/n19,n17/n20,n18/n19,n18/n20,n19/n21,n19/n22,n20/n21,n20/n22,n23/n25,n23/n26,n24/n25,n24/n26,n47/n25,n47/n26,n47/n24,n27/n28,n27/n29,n28/n29,n30/n27,n30/n28,n30/n29,n31/n27,n31/n28,n31/n29,n32/n27,n32/n28,n32/n29,n33/n34,n33/n35,n34/n35,n38/n39,n38/n40,n39/n40,n36/n33,n36/n34,n36/n35,n36/n38,n36/n39,n36/n40,n37/n33,n37/n34,n37/n35,n37/n38,n37/n39,n37/n40,n48/n36,n48/n33,n48/n34,n48/n35,n48/n38,n48/n39,n48/n40}
    \draw (\from) -- (\to);
    \end{tikzpicture}
    \caption{Some examples of connected graphs included in Theorem \ref{rank 4 theorem}. Let $\Gamma_i$ denote the $i$th graph. Then, $S\Omega(\Gamma_1) = K_{2,2}, S\Omega(\Gamma_2) = K_{1,3}, S\Omega(\Gamma_3) = C_4, S\Omega(\Gamma_4) = K_{1,3}, S\Omega(\Gamma_5) = C_4,$ and $S\Omega(\Gamma_6) = P_4$.}
    \label{r4 connected}
\end{figure}

\bigskip
We can follow a very similar procedure for determining graphs such that rank$(I+A(\Gamma))=5$ through finding their possible complete skeletons. This is what we will accomplish in the next theorem.

\begin{theoremN}
Figure \ref{complete skeletons rank 5} contains all complete skeletons of five vertices. Further, if $\Omega(\Gamma)$ is one of the graphs in Figure \ref{complete skeletons rank 5}, then rank$(I+A(\Gamma))=5$ with one exception. If $S\Omega(\Gamma)=P_5$, then rank$(I+A(\Gamma))=4$.
\end{theoremN}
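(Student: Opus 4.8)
The plan is to mirror the proof of Theorem \ref{rank 4 theorem}, splitting the argument into a combinatorial enumeration (to justify that the figure is exhaustive) and a linear-algebraic computation that has already been pre-reduced to the skeleton structure itself.

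First I would establish that Figure \ref{complete skeletons rank 5} is complete. By Theorem \ref{complete skeleton suff nec}, a graph on five vertices is the structure of a complete skeleton if and only if it has no connected structurally equivalent vertices, i.e.\ no pair of adjacent vertices $u,v$ with $N^1(u)\setminus\{v\} = N^1(v)\setminus\{u\}$ (true twins). So I would run through the $34$ graphs on five vertices up to isomorphism, discard each one containing such a twin pair, and verify that the survivors are exactly the graphs drawn in Figure \ref{complete skeletons rank 5}. This is finite bookkeeping: true-twin detection needs only the closed neighborhoods, so each graph is cheap to test, and I must check both directions—that every listed graph is genuinely twin-free and that no twin-free graph has been omitted.

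Second, I would invoke the reduction packaged in Equation \ref{multiplicity equation}: for any $\Gamma$ whose skeleton structure is one of these graphs $G$, we have $\textrm{rank}(I+A(\Gamma)) = \textrm{rank}(I+A(S\Omega(\Gamma))) = \textrm{rank}(I+A(G)) = 5-\Lambda$, where $\Lambda$ is the multiplicity of $-1$ as an eigenvalue of $A(G)$. The reduction is legitimate because collapsing a true-twin clique identifies equal rows of $I+A$ and hence preserves rank, Theorem \ref{rank equation 2} supplies the bound $\textrm{rank}(I+A(\Gamma))\le |V(S\Omega(\Gamma))| = 5$, and equality is exactly the content of $\Lambda = |V(S\Omega(\Gamma))|-\textrm{rank}(I+A(S\Omega(\Gamma)))$. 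Thus the whole theorem collapses to computing $\Lambda$, i.e.\ deciding whether $\det(I+A(G))=0$, for each five-vertex skeleton structure $G$. For the path I would use the known path spectrum: $A(P_5)$ has eigenvalues $2\cos(k\pi/6)$ for $k=1,\dots,5$, namely $\sqrt3,\,1,\,0,\,-1,\,-\sqrt3$; since $-1$ occurs with multiplicity $1$, we get $\Lambda=1$ and $\textrm{rank}(I+A(\Gamma))=4$. For every other survivor I would verify $\det(I+A(G))\neq 0$, giving $\Lambda=0$ and rank $5$.

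The hard part will be the uniqueness claim hidden in this last step: showing that $P_5$ is the \emph{only} twin-free five-vertex graph with $-1$ in its spectrum, which forces a determinant evaluation for each remaining skeleton. I would streamline this by recalling the twin dichotomy—a false-twin (non-adjacent, equal-neighborhood) pair $u,v$ forces only the eigenvalue $0$, via the eigenvector $e_u-e_v$, whereas the eigenvalue $-1$ is forced only by true twins, which are excluded by construction—so any $-1$ eigenvalue occurring in a skeleton must arise from genuinely non-twin structure. This observation narrows the candidate list but does not eliminate the case-by-case check of $\det(I+A(G))$, which remains the routine but unavoidable core of the argument. Finally, as after Theorem \ref{rank 4 theorem}, I would record the eigenvalue consequence: a graph $\Gamma$ on $n$ vertices with one of these skeletons has $-1$ as an eigenvalue of multiplicity $n-5$, except when $S\Omega(\Gamma)=P_5$, where the multiplicity is $n-4$.
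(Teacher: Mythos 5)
Your proposal is correct and follows essentially the same route as the paper: enumerate the $34$ five-vertex graphs, keep those with no connected structurally equivalent vertices via Theorem \ref{complete skeleton suff nec}, and then compute $\Lambda$ for each survivor, finding $\Lambda=1$ only for $P_5$. The only differences are cosmetic improvements --- you replace the paper's CAS check for $P_5$ with the explicit path spectrum $2\cos(k\pi/6)$ and you justify the rank reduction of Equation \ref{multiplicity equation} by noting that the collapsed rows of $I+A(\Gamma)$ are duplicates --- neither of which changes the argument's structure.
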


\begin{proof}
There are 34 total graphs with five vertices, which can all be found in \cite{graphclasses}. As we saw in the proof of Theorem \ref{rank 4 theorem}, we can find the possible complete skeletons of five vertices by determining all graphs with five vertices that contain no two connected structurally equivalent vertices; these graphs are the structures of the complete skeletons. The reader is encouraged to verify that the fifteen graphs in Figure \ref{complete skeletons rank 5} are all such graphs. Further, it can easily be verified with a CAS that $\Lambda=|V(S\Omega(\Gamma))|-\textrm{rank}(I+A(S\Omega(\Gamma)))=0$ for all of the graphs in Figure \ref{complete skeletons rank 5}, excluding the second one in the second row. In other words, rank$(I+A(\Gamma))=5$ unless $S\Omega(\Gamma)=P_5$. In this case, there is linear dependence among the columns of $I+A(S\Omega(\Gamma))$ and, in particular, rank$(I+A(S\Omega(\Gamma)))=4$, so $\Lambda=1$ and rank$(I+A(\Gamma))=4$ by Equation \ref{multiplicity equation}.
\end{proof}
Clearly, it grows increasingly tedious to find all of the possible complete skeletons of graphs with a given number of vertices as this number grows larger. Additionally, it is not particularly interesting, so we will not proceed to consider the six vertex case, even though this could be done. If one were to pursue this question, the theory has now been sufficiently developed to do so with relative ease. As we have seen, all one needs to do in order to find all of the possible complete skeleton structures with $k$ vertices is to find all of the graphs of $k$ vertices such that no two connected vertices are structurally equivalent and let these be the structures of the possible complete skeletons. Also, as we have seen, it is one thing to determine all the possible complete skeletons for a given number of vertices, but it would require further work to find $\Lambda$ for all these graphs. By what we saw, it seems reasonable to conjecture that $\Lambda$ is "often" 0, but it is difficult at this point to tell how strong "often" is. It certainly is too early to conjecture that $\Lambda$ is almost always 0. However, if the problem of determining $\Lambda$ in general could be solved, this would provide a characterization of graphs by the multiplicity of the -1 eigenvalue that is clean and easy to work with.

\section{Acknowledgements}

This work originated while the author participated in an REU-program (NSF-REU grant DMS-1757233) run virtually at Texas State University 
during Summer 2020, directed by Yong Yang (PI) and Thomas M. Keller (co-PI).

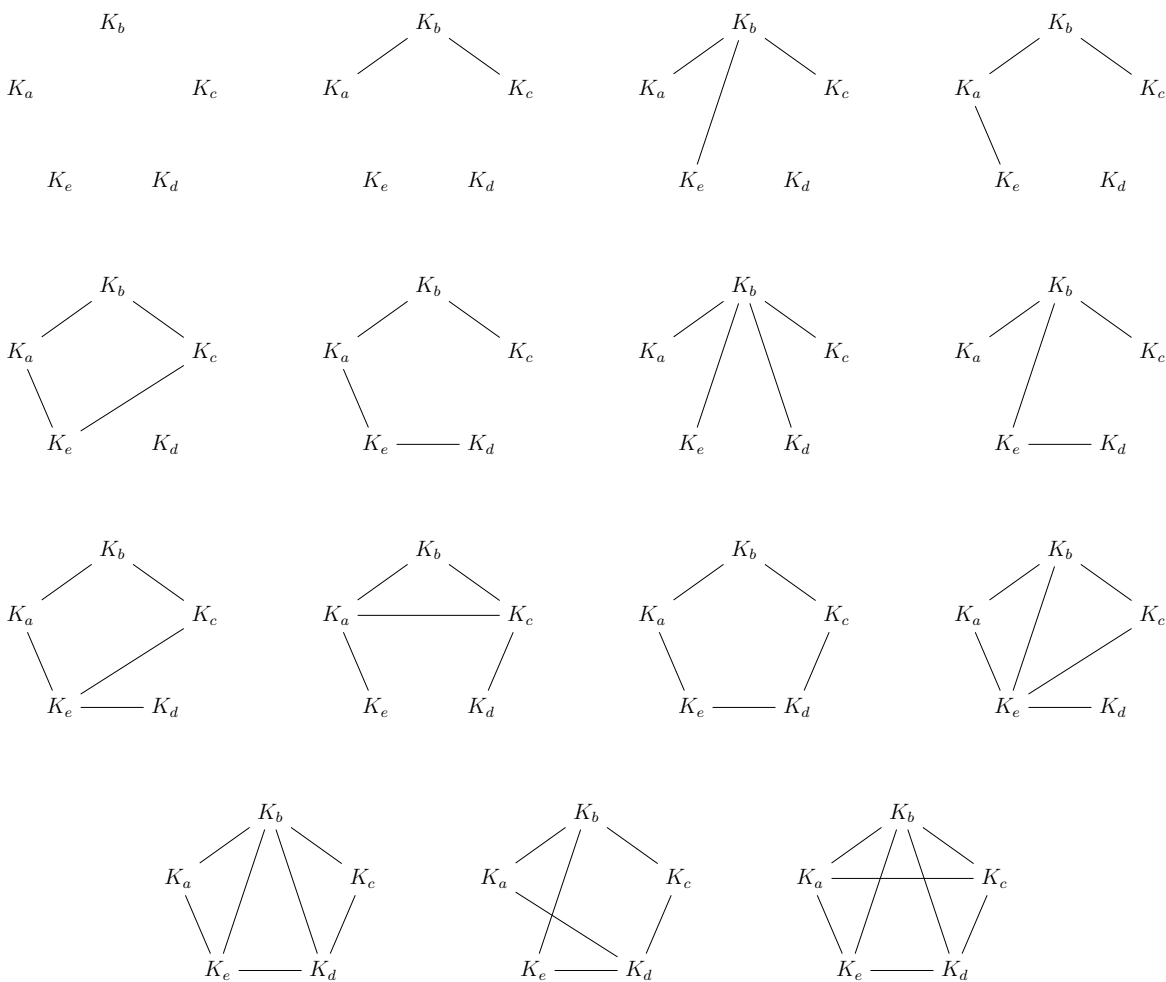
\begin{figure}

    \centering
    \scalebox{0.7}{
    \begin{tikzpicture}
     \node (n1) at (0.75,2.75) {$K_a$};
    \node (n2) at (1.5,1) {$K_e$};
    \node (n3) at (3.5,1) {$K_d$};
    \node (n4) at (4.25,2.75) {$K_c$};
    \node (n5) at (2.5,4) {$K_b$};
        \node (n6) at (6.75,2.75) {$K_a$};
    \node (n7) at (7.5,1) {$K_e$};
    \node (n8) at (9.5,1) {$K_d$};
    \node (n9) at (10.25,2.75) {$K_c$};
    \node (n10) at (8.5,4) {$K_b$};
     \node (n11) at (12.75,2.75) {$K_a$};
    \node (n12) at (13.5,1) {$K_e$};
    \node (n13) at (15.5,1) {$K_d$};
    \node (n14) at (16.25,2.75) {$K_c$};
    \node (n15) at (14.5,4) {$K_b$};
     \node (n16) at (18.75,2.75) {$K_a$};
    \node (n17) at (19.5,1) {$K_e$};
    \node (n18) at (21.5,1) {$K_d$};
    \node (n19) at (22.25,2.75) {$K_c$};
    \node (n20) at (20.5,4) {$K_b$};
    
    \node (n21) at (0.75,7.75) {$K_a$};
    \node (n22) at (1.5,6) {$K_e$};
    \node (n23) at (3.5,6) {$K_d$};
    \node (n24) at (4.25,7.75) {$K_c$};
    \node (n25) at (2.5,9) {$K_b$};
        \node (n26) at (6.75,7.75) {$K_a$};
    \node (n27) at (7.5,6) {$K_e$};
    \node (n28) at (9.5,6) {$K_d$};
    \node (n29) at (10.25,7.75) {$K_c$};
    \node (n30) at (8.5,9) {$K_b$};
     \node (n31) at (12.75,7.75) {$K_a$};
    \node (n32) at (13.5,6) {$K_e$};
    \node (n33) at (15.5,6) {$K_d$};
    \node (n34) at (16.25,7.75) {$K_c$};
    \node (n35) at (14.5,9) {$K_b$};
     \node (n36) at (18.75,7.75) {$K_a$};
    \node (n37) at (19.5,6) {$K_e$};
    \node (n38) at (21.5,6) {$K_d$};
    \node (n39) at (22.25,7.75) {$K_c$};
    \node (n40) at (20.5,9) {$K_b$};
    
    \node (n41) at (0.75,12.75) {$K_a$};
    \node (n42) at (1.5,11) {$K_e$};
    \node (n43) at (3.5,11) {$K_d$};
    \node (n44) at (4.25,12.75) {$K_c$};
    \node (n45) at (2.5,14) {$K_b$};
        \node (n46) at (6.75,12.75) {$K_a$};
    \node (n47) at (7.5,11) {$K_e$};
    \node (n48) at (9.5,11) {$K_d$};
    \node (n49) at (10.25,12.75) {$K_c$};
    \node (n50) at (8.5,14) {$K_b$};
     \node (n51) at (12.75,12.75) {$K_a$};
    \node (n52) at (13.5,11) {$K_e$};
    \node (n53) at (15.5,11) {$K_d$};
    \node (n54) at (16.25,12.75) {$K_c$};
    \node (n55) at (14.5,14) {$K_b$};
     \node (n56) at (18.75,12.75) {$K_a$};
    \node (n57) at (19.5,11) {$K_e$};
    \node (n58) at (21.5,11) {$K_d$};
    \node (n59) at (22.25,12.75) {$K_c$};
    \node (n60) at (20.5,14) {$K_b$};
    
    \node (n61) at (3.75,-2.25) {$K_a$};
    \node (n62) at (4.5,-4) {$K_e$};
    \node (n63) at (6.5,-4) {$K_d$};
    \node (n64) at (7.25,-2.25) {$K_c$};
    \node (n65) at (5.5,-1) {$K_b$};
     \node (n66) at (9.75,-2.25) {$K_a$};
    \node (n67) at (10.5,-4) {$K_e$};
    \node (n68) at (12.5,-4) {$K_d$};
    \node (n69) at (13.25,-2.25) {$K_c$};
    \node (n70) at (11.5,-1) {$K_b$};
     \node (n71) at (15.75,-2.25) {$K_a$};
    \node (n72) at (16.5,-4) {$K_e$};
    \node (n73) at (18.5,-4) {$K_d$};
    \node (n74) at (19.25,-2.25) {$K_c$};
    \node (n75) at (17.5,-1) {$K_b$};
    
    \foreach \from/\to in {n46/n50,n50/n49,n51/n55,n55/n54,n55/n52,n56/n60,n60/n59,n56/n57,n21/n25,n25/n24,n21/n22,n22/n24,n26/n30,n30/n29,n26/n27,n27/n28,n31/n35,n35/n32,n35/n33,n35/n34,n36/n40,n40/n39,n40/n37,n37/n38,n6/n10,n10/n9,n6/n9,n6/n7,n9/n8,n11/n15,n15/n14,n14/n13,n13/n12,n12/n11,n1/n5,n5/n4,n1/n2,n2/n4,n2/n3,n16/n20,n20/n19,n19/n17,n16/n17,n17/n18,n17/n20,n61/n65,n65/n64,n64/n63,n63/n62,n65/n62,n65/n63,n62/n61,n66/n70,n70/n69,n69/n68,n68/n67,n67/n70,n66/n68,n71/n75,n75/n74,n74/n73,n73/n72,n72/n71,n71/n74,n75/n72,n75/n73}
    \draw (\from) -- (\to);
    \end{tikzpicture}}
    \caption{All the possible complete skeletons of graphs such that rank$(I+A(\Gamma))=5$.}
    \label{complete skeletons rank 5}
\end{figure}

\printbibliography
\end{document}